\documentclass[10pt]{article}
\usepackage{mathrsfs}
\usepackage{amsmath,amssymb}
\usepackage{amsthm}
\textheight 230mm \textwidth 155mm \topmargin -8mm
\oddsidemargin 3mm \evensidemargin 3mm
\usepackage[colorlinks,linkcolor=red,anchorcolor=blue,citecolor=green]{hyperref}
\usepackage{graphics,graphicx}
\usepackage{color}
\usepackage{enumerate}
 \usepackage[titletoc]{appendix}
\allowdisplaybreaks

\usepackage{tikz}
\usepackage{indentfirst}



\newtheorem{thm}{Theorem}[section]
\newtheorem{lem}[thm]{Lemma}

\newtheorem{cor}[thm]{Corollary}

\newtheorem{remark}[thm]{Remark}

\setcounter{section}{0}

\def\qed{\hfill \rule{4pt}{7pt}}

\def\pf{\noindent {\it{Proof.}\hskip 2pt}}

\setcounter{footnote}{-1}

\def\R{{\mathbb{R}}}
\def\E{{\mathbb{E}}}
\def\N{{\mathbb{N}}}
\def\P{{\mathbb{P}}}

\def\Z{{\mathbb{Z}}}

\def\sO{{\mathcal{O}}}

\def\sX{{\mathcal{X}}}

\def\RW{{\mathrm{RW}}}
\def\zero{{\mathbf{0}}}

%
%
%
%
%
%
%
%
%

\begin{document}
\begin{center}
{{\large\bf The invariance principle and the large deviation for the biased random walk on $\mathbb{Z}^d$}\footnote{The project is supported partially by CNNSF (No.~11671216).}}
\end{center}
\vskip 2mm
\begin{center}
Yuelin Liu$^a$, Vladas Sidoravicius$^b$, Longmin Wang$^a$, Kainan Xiang$^a$
\vskip 1mm
\footnotesize{$^a$School of Mathematical Sciences, LPMC, Nankai University}\\
\footnotesize{Tianjin City, 300071, P. R. China}\\
\footnotesize{$^b$Courant Institute of Mathematical Sciences, New York, NY 10012, USA}\\
\footnotesize{\& NYU-ECNU Institute of Mathematical Sciences at NYU Shanghai}\\
\footnotesize{Emails: yuelinliu@mail.nankai.edu.cn} (Liu)\\
\footnotesize{~~~~~vs1138@nyu.edu} (Sidoravicius)\\
\footnotesize{~~~~~~wanglm@nankai.edu.cn} (Wang)\\
\footnotesize{~~~~~~~~~~~~kainanxiang@nankai.edu.cn} (Xiang)
\end{center}
\begin{abstract}
In this paper, we establish the invariance principle and the large deviation for the biased random walk $\RW_\lambda$ with $\lambda\in [0,1)$ on $\mathbb{Z}^d, d\geq 1.$\\

\noindent{\bf AMS 2010 subject classifications}. 60J10, 60F05, 60F10.\\

\noindent{\bf Key words and phrases.} Biased random walk, invariance principle, large deviation principle.
\end{abstract}

\section{Introduction}
\setcounter{equation}{0}
\noindent The biased random walk $\RW_\lambda$ with parameter $\lambda\in [0,\infty)$ was introduced to design a  Monte-Carlo algorithm for the self-avoiding walk by Berretti and Sokal \cite{BS1985}. The idea was refined and developed in \cite{LS1988,SJ1989, RD1994}.  Lyons,  and Lyons, Pemantle and Peres  produced a sequence of remarkable papers on  $\RW_\lambda$ (\cite{LR1990, LR1992, LR1995, LPP1996a, LPP1996b}).
$\RW_\lambda$  was studied in a number of works later (see for example \cite{BHOZ2013, AE2014, BFS2014, HS2015, BF2014} and the references therein). \cite{SSSWX2018} gave a spectral radius and several additional properties for biased random walk on  infinite graphs. In \cite{SSSWX2018b}, it was shown that there is a phase transition in relation of the tree number in the uniform spanning forest on Euclidean lattice equipped with a network corresponding to biased random walk.  Bowditch \cite{BA2018} proved a quenched invariance principle for $\{\vert X_n\vert\}_{n=0}^{\infty}$ when $\{X_n\}_{n=0}^{\infty}$ is a $\RW_\lambda$ on supercritical Galton-Watson tree, and showed that the corresponding scaling limit is a one dimensional Brownian motion. 


Our paper aims to study the invariance principle (IP) and the large deviation principle (LDP) for $\RW_\lambda $ with $\lambda\in [0,1)$ on $d$-dimensional integer lattice $\mathbb{Z}^d\ (d\geq 1).$ Main results of  this paper are Theorems \ref{thm2.1} and \ref{thm3.1}.
We define $\RW_\lambda\ (\lambda\geq 0)$ on $\mathbb{Z}^d$ as follows: Let $\mathbf{0}=(0,\cdots,0)\in \mathbb{Z}^d$ and
$$\vert x\vert=\sum\limits_{i=1}^d\vert x_i\vert,\ x=(x_1,\cdots,x_d)\in\mathbb{Z}^d$$
which is the graph distance between $x$ and $\mathbf{0}.$ Write $\mathbb{N}$ for the set of natural numbers, and let $\mathbb{Z}_{+}=\mathbb{N}\cup\{0\}.$ For any $n\in\mathbb{Z}_{+},$ define
$$B(n)=\left\{x\in \mathbb{Z}^d:\ \vert x\vert\leq n\right\},\ \partial B(n)=\left\{x\in \mathbb{Z}^d:\ \vert x\vert=n\right\}.$$
If the edge $e=\{x,y\}$ is at  graph distance $n$ from $\mathbf{0},$ namely $\vert x\vert\wedge\vert y\vert=n,$ then let its conductance to be $\lambda^{-n}.$ Denote by
$\RW_\lambda\ (X_n)_{n=0}^\infty$ the random walk associated to the above conductances and call it the biased random walk with parameter $\lambda.$ $\RW_{\lambda}$ $(X_n)_{n=0}^{\infty}$ has the following transition probability:
\begin{eqnarray}\label{(1.1)}
p(v,u):=p_{\lambda}(v,u)=\left\{\begin{array}{cl}
1/d_v &{\rm if}\ v=\mathbf{0},\\
\ \\
      \frac{\lambda}{d_v+\left(\lambda-1\right)d_v^-} &{\rm if}\ u\in \partial B(|v|-1)\ \mbox{and}\ v\neq \mathbf{0},\\
      \ \\
     \frac{1}{d_v+\left(\lambda-1\right)d_v^-} &{\rm otherwise}.
   \end{array}
\right.
\end{eqnarray}
Here $d_v=2d$ is the degree of the vertex $v;$ and $d_v^-$ (resp. $d_v^+$) is the number of edges connecting $v$ to $\partial B(|v|-1)$ (resp. $\partial B(|v|+1)$). Note that
$\RW_1$ $(X_n)_{n=0}^{\infty}$  is the simple random walk (SRW) on $\mathbb{Z}^d;$ and
\begin{eqnarray*}
d_v^+=d+\kappa (v),\ d_v^-=d-\kappa (v),\ v=(v_1,\cdots,v_d)\in\mathbb{Z}^d,
\end{eqnarray*}
where $\kappa(v)=\#\{i:\ v_i=0\}$ (with $\# A$ being the cardinality of a set $A$). For any $n\in\mathbb{Z}_+,$ write $X_n=\left(X_n^1,\cdots,X_n^d\right).$

It is known that on $\Z^d\ (d\geq1)$, $\RW_{\lambda}$ $(X_n)_{n=0}^{\infty}$ is transient for $\lambda<1$ and positive recurrent for $\lambda>1$ (R. Lyons \cite{LR1995}, R. Lyons and Y. Peres \cite[Theorem~3.10]{LP2017}).
Let
\[\sX = \left\{ x=(x_1,\cdots,x_d) \in \Z^d:\ x_i = 0 \text{ for some } 1 \leq i \leq d \right\}. \]
Then from \cite[Lemma 2.3 and Theorem 2.4]{SSSWX2018b},  the next two results were stated for $\lambda\in (0,1)$. In fact, for any $0\leq \lambda<1,$ with probability $1$, 
\begin{eqnarray}\label{facts}
\left\{\begin{array}{ll}
\RW_{\lambda}\ (X_n)_{n=0}^{\infty}\ \mbox{visits}\ \sX \ \mbox{only finitely many times;}\\
\ \ \ \\
\frac{1}{n} \left(\left|X_n^1\right|, \cdots, \left|X_n^d\right|\right) \rightarrow \frac{1- \lambda}{1+ \lambda} \left(\frac{1}{d},\cdots,\frac{1}{d}\right)\ \mbox{as}\ n\rightarrow\infty.
\end{array}
\right.
\end{eqnarray}
And when $\lambda>1,$ $(X_n)_{n=0}^{\infty}$ is positive recurrent and ergodic with ${\bf 0}$ speed: 
\[\lim\limits_{n\rightarrow\infty}\frac{\left(\left|X_n^1\right|, \cdots, \left|X_n^d\right|\right)}{n}=\mathbf{0}\]  almost surely. The related
central limit theorem (CLT) and IP can be derived from \cite{JG2004} and \cite{MPU2006} straightforwardly. 

Thus a natural question is to study CLT, IP with $\lambda\in [0,1)$ and LDP for $\RW_\lambda$ on $\mathbb{Z}^d.$ The LDP for $\RW_\lambda$ $(X_n)_{n=0}^{\infty}$ means LDP for the scaled reflected biased random walk $\left\{\frac{1}{n}\left(\left\vert X_n^1\right\vert,\cdots,\left\vert X_n^d\right\vert\right)\right\}_{n=1}^{\infty}$. And CLT, IP and LDP for $\RW_0$ are interesting
only for $d\geq 2.$  In this paper, Theorem \ref{thm2.1} proves CLT and IP for $\RW_\lambda, \lambda\in[0,1)$. However  the proof of the Theorem \ref{thm2.1}  shows that the  scaling limit in this case is not a $d$-dimensional Brownian motion. For Theorem \ref{thm3.1}, we derive LDP for scaled reflected $\RW_\lambda, \lambda\in[0,1)$. The rate function of scaled reflected $RW_{\lambda}$ differs from that of drifted random walk, though there is a strong connection between them. Here, we only can calculate the rate function $\Lambda^\ast$ out when $d\in\{1,2\}$ and $\lambda\in (0,1)$, $d\geq 2$ and $\lambda=0$. To obtain an explicit rate function when $d\geq 3$ and $\lambda\in (0,1)$ still remains as an open problem. And in the recurrent case when $\lambda>1$, we hope to establish a LDP with proper normalization in the future work.

\section{CLT and Invariance Principle for $\RW_\lambda$ with $\lambda\in [0,1)$}\label{sec2}
\setcounter{equation}{0}
\noindent In this section, we fix $\lambda\in [0,1),$ then use the matingale's CLT for $\mathbb{R}^d$-valued martingales and the martingale characterization of Markov chains
to prove the IP for reflected $\RW_\lambda$
(Theorem \ref{thm2.1}). And CLT for reflected $\RW_\lambda$ is a consequence of corresponding IP.

To describe our main result, we need to introduce some notations. For any nonnegative definite symmetric $d\times d$ matrix $A$, let $\mathcal{N}(\mathbf{0},A)$ be the normal distribution with mean $\mathbf{0}$ and covariance matrix $A.$ Define the following positive definite symmetric $d\times d$ matrix $\Sigma=(\Sigma_{ij})_{1\leq i,j\leq d}:$
$$\Sigma_{ii}=\frac{1}{d}-\frac{(1-\lambda)^2}{d^2(1+\lambda)^2},\ \Sigma_{ij}=-\frac{(1-\lambda)^2}{d^2(1+\lambda)^2},\ 1\leq i\not=j\leq d.$$
For a random sequence $\{Y_n\}_{n=1}^{\infty}\subset\mathbb{R}^d,$ $Y_n\rightarrow \mathcal{N}(\mathbf{0},A)$ means that as $n\rightarrow\infty,$ $Y_n$ converges in distribution to the normal distribution $\mathcal{N}(\mathbf{0},A)$ in Skorohod space $D\left([0,\infty),\mathbb{R}^d\right)$. 
For any $a\in\mathbb{R},$ let $\lfloor a\rfloor$ be the integer part of $a.$ Put
$$v=\left(\frac{1-\lambda}{d(1+\lambda)},\cdots,\frac{1-\lambda}{d(1+\lambda)}\right)\in\mathbb{R}^d.$$
\vskip 3mm

\begin{thm}[IP and CLT]\label{thm2.1} 
Let $0 \leq \lambda < 1$ and $(X_m)_{m=0}^{\infty}$ be $\RW_{\lambda}$ on $\mathbb{Z}^d$ starting at $x$, $x\in \mathbb{Z}^d$. 
Then, on $D\left([0,\infty),\mathbb{R}^d\right),$ 
\[\left(\frac{\left(\left|X_{\lfloor nt\rfloor}^1\right|,\cdots, \left|X_{\lfloor nt\rfloor}^d\right|\right)-nvt}{\sqrt{n}}\right)_{t\geq 0}\ 
   \]
converges in distribution to $\frac{1}{\sqrt{d}}\left[I- \frac{1-\rho_\lambda}{d}E\right]W_t$ as $n\rightarrow\infty$, where $(W_t)_{t\geq 0}$ is the $d$-dimensional Brownian motion starting at $\mathbf{0}$, $I$ is identity matrix and $E$ denotes the $d\times d$ matrix whose entries are all equal to $1$ and $\rho_\lambda = 2 \sqrt{\lambda}/(1+\lambda)$ is the spectral radius of $X_n$ (see \cite[Theorem~1.1]{SSSWX2018b}).

In particular, we have 
\begin{eqnarray*}
\frac{(|X_n^1|,\cdots,|X_n^d|)-nv}{\sqrt{n}}\rightarrow \mathcal{N}(\mathbf{0},\Sigma)\ 
\end{eqnarray*}
in distribution as $n \to \infty$. 
\end{thm}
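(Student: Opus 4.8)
The plan is to realize the reflected walk $Y_m := (|X_m^1|,\dots,|X_m^d|)$ as a functional of a martingale and invoke a martingale invariance principle for $\R^d$-valued martingales. First, using \eqref{facts}, the walk visits $\sX$ only finitely often a.s., so after a random but a.s.\ finite time $\tau$ the coordinates $X_m^i$ never return to $0$; on the event $\{m > \tau\}$ each $|X_m^i|$ increments exactly like $X_m^i$ up to sign, and the transition probabilities in \eqref{(1.1)} reduce to the constant values $p(v,u) = \lambda/(2\lambda) \cdot$ (correction) — more precisely, away from $\sX$ one has $d_v^- = d_v^+ = d$, so a step decreases $|v|$ with probability $\lambda/(1+\lambda)$ and increases it with probability $1/(1+\lambda)$, and the chosen coordinate is uniform. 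Hence away from $\sX$ the conditional mean drift of $Y_m$ is exactly $v = \frac{1-\lambda}{d(1+\lambda)}(1,\dots,1)$ and the conditional one-step covariance is a fixed matrix $Q$; I would compute $Q$ and check $\frac{1}{d}(I - \frac{1-\rho_\lambda}{d}E)^2 = Q$, equivalently that $\frac{1}{d}(I-\frac{1-\rho_\lambda}{d}E)$ is the symmetric square root — this is a short linear-algebra verification since $E$ has eigenvalues $d$ (once) and $0$ ($d-1$ times), so both sides are simultaneously diagonalized and one only matches eigenvalues; note $1 - \frac{1-\rho_\lambda}{d}\cdot d = \rho_\lambda$ gives the eigenvalue on the span of $(1,\dots,1)$, consistent with $\Sigma$ having row sums $\rho_\lambda^2/d$ after one checks $\Sigma = \frac1d(I-\frac{1-\rho_\lambda}{d}E)^2$.

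Next, set $M_m := Y_m - Y_0 - \sum_{k=0}^{m-1} \E[Y_{k+1}-Y_k \mid \sF_k]$, which is an $\sF_m$-martingale by construction (martingale characterization of Markov chains). The compensator $\sum_{k<m}\E[Y_{k+1}-Y_k\mid\sF_k]$ equals $mv$ plus an a.s.\ bounded correction term: the drift differs from $v$ only at the finitely many times the walk is in $\sX$, and each such discrepancy is $O(1)$, so $\sum_{k<\lfloor nt\rfloor}\E[\Delta Y_k\mid\sF_k] - \lfloor nt\rfloor v$ is a.s.\ bounded uniformly in $n,t$, hence $o(\sqrt n)$ uniformly on compacts. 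Likewise $\frac{1}{\sqrt n}(Y_{\lfloor nt\rfloor} - \lfloor nt\rfloor v - M_{\lfloor nt\rfloor}) \to 0$ uniformly on compacts, and $\frac{\lfloor nt\rfloor v - nvt}{\sqrt n}\to 0$, so it suffices to prove $\frac{1}{\sqrt n}M_{\lfloor n\cdot\rfloor}$ converges in $D([0,\infty),\R^d)$ to $\frac{1}{\sqrt d}(I - \frac{1-\rho_\lambda}{d}E)W$.

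For the martingale invariance principle (e.g.\ the functional CLT for martingale difference arrays, Ethier–Kurtz / Helland / Whitt) I must verify: (i) the conditional covariances average correctly, $\frac{1}{n}\sum_{k<\lfloor nt\rfloor}\E[\Delta M_k (\Delta M_k)^{\top}\mid\sF_k] \to t\,Q$ in probability — this again follows because $\E[\Delta M_k(\Delta M_k)^\top\mid\sF_k]$ equals the fixed matrix $Q$ off $\sX$ and differs on only finitely many $k$, so the Cesàro average converges to $Q$ a.s.; and (ii) a Lindeberg/conditional-Lindeberg condition, which is immediate since the increments $\Delta M_k$ are uniformly bounded (each $|\Delta Y_k|\le 1$ and the drift correction is bounded), so $\frac{1}{\sqrt n}\max_{k\le nt}|\Delta M_k|\to 0$. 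Then the martingale FCLT yields convergence of $\frac{1}{\sqrt n}M_{\lfloor n\cdot\rfloor}$ to a Brownian motion with covariance matrix $Q = \frac1d(I-\frac{1-\rho_\lambda}{d}E)^2$, which is exactly $\frac{1}{\sqrt d}(I-\frac{1-\rho_\lambda}{d}E)W$ in distribution; taking $t=1$ and reading off the one-time marginal gives the stated CLT with covariance $\Sigma$. The main obstacle is the careful bookkeeping at the finitely-many visits to $\sX$: one has to argue that these contribute a negligible (a.s.\ bounded, hence $o(\sqrt n)$) perturbation to both the compensator and the bracket process, uniformly in $n$ on compact time intervals, so that the scaling limit is genuinely governed by the clean off-$\sX$ dynamics; everything else is the standard martingale-FCLT machinery plus the eigenvalue computation identifying the limiting covariance.
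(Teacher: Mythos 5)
Your proposal is correct and follows essentially the same route as the paper: both write the reflected walk as drift plus an $\mathcal{F}_k$-martingale, use the fact from \eqref{facts} that $\sX$ is visited only finitely often to identify the asymptotic drift $v$ and one-step covariance $\Sigma$, and then apply a martingale functional CLT. The one genuine (small) difference is that you verify convergence of the \emph{predictable} bracket $\frac1n\sum_{k}\E[\Delta M_k(\Delta M_k)^\top\mid\sF_{k-1}]\to t\Sigma$, which is deterministic off $\sX$ and so needs no law-of-large-numbers step, whereas the paper works with the \emph{optional} bracket $A_n(t)=\sum_k\xi_{n,k}\xi_{n,k}^\top$ and therefore has to invoke a strong law for uncorrelated variables to pass from $\sum(\xi^i_{n,k})^2$ to $\sum\E[(\xi^i_{n,k})^2\mid\sF_{k-1}]$; your variant is marginally cleaner provided you cite an FCLT stated in terms of the angle bracket (Helland or the relevant Ethier--Kurtz corollary) rather than the square bracket. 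Two small slips to fix: you wrote $\frac1d\bigl(I-\frac{1-\rho_\lambda}{d}E\bigr)$ as ``the symmetric square root'' of $Q=\frac1d\bigl(I-\frac{1-\rho_\lambda}{d}E\bigr)^2$, but the square root is of course $\frac{1}{\sqrt d}\bigl(I-\frac{1-\rho_\lambda}{d}E\bigr)$; and the throwaway phrase ``$p(v,u)=\lambda/(2\lambda)\cdot$(correction)'' should just be deleted, since the sentence following it already states the correct off-$\sX$ transition probabilities.
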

\vskip 2mm

\begin{remark} Note $\Sigma=0$ when $d=1$ and $\lambda=0.$ Hence for $d=1$ and $\lambda=0,$ both
$$\frac{|X_n|-nv}{\sqrt{n}}=\frac{\vert X_0\vert+n-nv}{\sqrt{n}}\rightarrow \mathcal{N}(\mathbf{0},\Sigma)$$
and $\left(\frac{\left|X_{\lfloor nt\rfloor}\right|-nvt}{\sqrt{n}}\right)_{t\geq 0}$ converging in distribution to $Y=(Y_t)_{t\geq 0}$
are not interesting.

From Theorem \ref{thm2.1}, the following holds: For $\RW_{\lambda}$ $(X_m)_{m=0}^{\infty}$ on $\mathbb{Z}^d$ starting at any fixed vertex with $\lambda<1,$
on $D\left([0,\infty),\mathbb{R}\right),$ as $n\rightarrow\infty,$ $\left(\frac{\left|X_{\lfloor nt\rfloor}\right|-n\frac{1-\lambda}{1+\lambda}t}{\sqrt{n}}\right)_{t\geq 0}$
converges in distribution to $\left(\rho_{\lambda} B_t\right)_{t\geq 0}$ and  $(B_t)_{t\geq 0}$ is the $1$-dimensional Brownian motion starting at $0.$
\end{remark}
\vskip 2mm

\noindent{\bf Proof of Theorem \ref{thm2.1}.}
We only prove IP for $\left(\frac{\left(\left|X_{\lfloor nt\rfloor}^1\right|,\cdots, \left|X_{\lfloor nt\rfloor}^d\right|\right)-nvt}{\sqrt{n}}\right)_{t\geq 0}.$
Let
$$\mathcal{F}_k=\sigma(X_0,X_1,\cdots,X_k),\ k\in\mathbb{Z}_{+}.$$
Give any $1\leq i\leq d,$ and define $f_{i}: ~\mathbb{Z}^d\rightarrow \mathbb{R}$ as follows: for any $x=(x_1,\cdots,x_d)\in\mathbb{Z}^d,$
\begin{eqnarray}\label{generator}
f_{i}(x) = \left\{\begin{array}{cl}
      \frac{2}{d+\kappa_x+\lambda \left(d-\kappa_x\right)} &{\rm if} \ x_i= 0 ,\\
      \ \\
       \frac{1- \lambda}{d+\kappa_x+\lambda \left(d-\kappa_x\right)} &{\rm if} \ x_i\neq  0 ,\\
     \end{array}
\right.
\end{eqnarray}
then for any $k\in\mathbb{N},$
$$f_i\left(X_{k-1}^i\right)=\mathbb{E}\left(\left.\left|X_{k}^i\right|-\left|X_{k-1}^i\right|\ \right| \mathcal{F}_{k-1}\right).$$
By martingale characterization theorem of Markov chain $\{|X_k^i|\}_k$, \[\left\{\left|X_k^i\right|-\left|X_{k-1}^i\right|-f_i(X_{k-1})\right\}_{k\geq 1}\] is an $\mathcal{F}_k$-adapted martingale-difference sequence, and so is
$$\left\{\xi_{n, k}^i:= \frac{1}{\sqrt{n}}\left(\left|X_k^i\right|-\left|X_{k-1}^i\right|-f_i(X_{k-1})\right)\right\}_{k= 1}\ \mbox{for any}\ n\in\mathbb{N}.$$
For each random sequence $\left\{\xi_{n,k}:=\left(\xi_{n,k}^1,\cdots,\xi_{n,k}^d\right)\right\}_{k\geq 1},$ define $M^n=(M^n_t)_{t\geq 0}$ and $A_n=(A_n(t))_{t\geq 0}$ as follows
\begin{eqnarray*}
M_t^n=\left(M_t^{n,1},\cdots,M_t^{n,d}\right)=\sum\limits_{k= 1}^{\lfloor nt\rfloor} \xi_{n,k}, \ A_n(t)=\left(A_n^{i,j}(t)\right)_{1\leq i,j\leq d}=\sum\limits_{k= 1}^{\lfloor nt\rfloor}\left(\xi_{n,k}^i\xi_{n,k}^j\right)_{1\leq i,j\leq d},\ t\geq 0.
\end{eqnarray*}
Then each $\left(M_t^{n,i}\right)_{t\geq 0}$ is an $\mathcal{F}_{\lfloor nt\rfloor}$-martingale with $M^{n,i}_0=0,$
and further each $M^n$ is a $\mathbb{R}^d$-valued $\mathcal{F}_{\lfloor nt\rfloor}$-martingale with $M_0=\mathbf{0}.$


Note that for any $t>0,$
$$\left\vert M_t^n-M^n_{t-}\right\vert\leq \left\vert \xi_{n,\lfloor nt\rfloor}\right\vert =\sum\limits_{i=1}^d\left\vert\xi_{n,\lfloor nt\rfloor}^i\right\vert\leq \frac{2d}{\sqrt{n}},$$
and for any $T\in (0,\infty),$
\begin{eqnarray}\label{bound}
\lim\limits_{n\rightarrow\infty}\mathbb{E}\left[\sup_{t\leq T}\left|M^n_t-M^n_{t-}\right|\right]\leq\lim\limits_{n \rightarrow \infty} \frac{2d}{\sqrt n} = 0.\end{eqnarray}

Fix any $1\leq i,j\leq d$ and $0\leq s<t<\infty.$ Then by the martingale property,
\begin{eqnarray*}
\mathbb{E}\left[\left. M_s^{n,j}\left(M_t^{n,i}-M_s^{n,i}\right)\right\vert \mathcal{F}_{\lfloor ns\rfloor}\right]=M_s^{n,j}
  \mathbb{E}\left[\left. M_t^{n,i}-M_s^{n,i}\right\vert \mathcal{F}_{\lfloor ns\rfloor}\right]=0,
\end{eqnarray*}
and similarly $\mathbb{E}\left[\left. M_s^{n,i}\left(M_t^{n,j}-M_s^{n,j}\right)\right\vert \mathcal{F}_{\lfloor ns\rfloor}\right]=0.$ Additionally
\begin{eqnarray*}
&&\mathbb{E}\left[\left.\left(M_t^{n,i}-M_s^{n,i}\right)\left(M_t^{n,j}-M_s^{n,j}\right)\right\vert\mathcal{F}_{\lfloor ns\rfloor}\right]\\
&&=\mathbb{E}\left[\left.\sum\limits_{k=\lfloor ns\rfloor+1}^{\lfloor nt\rfloor}\xi_{n,k}^i\xi_{n,k}^j\right\vert\mathcal{F}_{\lfloor ns\rfloor}\right]+
    \mathbb{E}\left[\left.\sum\limits_{\lfloor ns\rfloor+1\leq r<k\leq \lfloor nt\rfloor}\xi_{n,k}^i\xi_{n,r}^j\right\vert\mathcal{F}_{\lfloor ns\rfloor}\right]\\
&&\ \ \ \ \ + \mathbb{E}\left[\left.\sum\limits_{\lfloor ns\rfloor+1\leq r<k\leq \lfloor nt\rfloor}\xi_{n,k}^j\xi_{n,r}^i\right\vert\mathcal{F}_{\lfloor ns\rfloor}\right]\\
&&=\mathbb{E}\left[\left.\sum\limits_{k=\lfloor ns\rfloor+1}^{\lfloor nt\rfloor}\xi_{n,k}^i\xi_{n,k}^j\right\vert\mathcal{F}_{\lfloor ns\rfloor}\right]+
    \sum\limits_{\lfloor ns\rfloor+1\leq r<k\leq \lfloor nt\rfloor}\mathbb{E}\left[\left.\mathbb{E}\left[\left.\xi_{n,k}^i\xi_{n,r}^j\right\vert\mathcal{F}_r\right]
    \right\vert\mathcal{F}_{\lfloor ns\rfloor}\right]\\
&&\ \ \ \ \ + \sum\limits_{\lfloor ns\rfloor+1\leq r<k\leq \lfloor nt\rfloor}\mathbb{E}\left[\left.\mathbb{E}\left[\left.\xi_{n,k}^j\xi_{n,r}^i\right\vert\mathcal{F}_r\right]
    \right\vert\mathcal{F}_{\lfloor ns\rfloor}\right]\\
&&=\mathbb{E}\left[\left.\sum\limits_{k=\lfloor ns\rfloor+1}^{\lfloor nt\rfloor}\xi_{n,k}^i\xi_{n,k}^j\right\vert\mathcal{F}_{\lfloor ns\rfloor}\right]=
 \mathbb{E}\left[\left. A_n^{i,j}(t)-A_n^{i,j}(s)\right\vert\mathcal{F}_{\lfloor ns\rfloor}\right].
\end{eqnarray*}
Since
\begin{eqnarray*}
&&M_t^{n,i}M_t^{n,j}-A_n^{i,j}(t)\\
&&=M_s^{n,i}M_s^{n,j}-A_n^{i,j}(s)+M_s^{n,i}\left(M_t^{n,j}-M_s^{n,j}\right)+M_s^{n,j}\left(M_t^{n,i}-M_s^{n,i}\right)\\
&&\ \ \ +\left(M_t^{n,i}-M_s^{n,i}\right)\left(M_t^{n,j}-M_s^{n,j}\right)-\left(A_n^{i,j}(t)-A_n^{i,j}(s)\right),
\end{eqnarray*}
we see that
\begin{eqnarray}\label{martingale}
\mathbb{E}\left[\left. M_t^{n,i}M_t^{n,j}-A_n^{i,j}(t)\right\vert \mathcal{F}_{\lfloor ns\rfloor}\right]=M_s^{n,i}M_s^{n,j}-A_n^{i,j}(s),
\end{eqnarray}
which implies that $A_n^{i,j}= \left[M_n^i, M_n^j\right]$ with $[X, Y]$ being the cross-variation process of $X$ and $Y$. 

For any $1\leq i\neq j\leq d,$ by (\ref{facts}) and (\ref{generator}), almost surely,
\begin{eqnarray*}
f_i\left(X_{k-1}\right)=f_j\left(X_{k-1}\right)=\frac{1-\lambda}{d(1+\lambda)}\ \mbox{for large enough}\ k,
\end{eqnarray*}
and as $n\rightarrow\infty,$
\begin{eqnarray*}
&&\frac{1}{n} \sum\limits_{k=1}^{\lfloor nt \rfloor}\left(\left|X_{k}^j\right|-\left|X_{k-1}^j\right|\right)=\frac{1}{n}\left(\left|X_{\lfloor nt \rfloor}^j\right|-\left|X_{0}^j\right|\right)\rightarrow
  \frac{(1-\lambda)t}{d(1+\lambda)}\\
 \end{eqnarray*}
which also holds with $j$ replaced by $i$. Thus
\[
\begin{split}
&\sum_{k=1}^{\lfloor nt \rfloor} \xi_{n,k}^{i}\xi_{n, k}^{j}= \sum_{k=1}^{\lfloor nt \rfloor} \frac{(|X_{k}^i|-|X_{k-1}^i|-f_{i}(X_{k-1}))}{\sqrt{n}} \frac{(|X_{k}^j|-|X_{k-1}^j|-f_{j}(X_{k-1}))}{\sqrt{n}} \\
&\ \ \ \ =\frac{1}{n} \sum\limits_{k=1}^{\lfloor nt \rfloor} \left\{\left(|X_{k}^i|-|X_{k-1}^i|\right)(|X_{k}^j|-|X_{k-1}^j|)-
       (|X_{k}^i|-|X_{k-1}^i|) f_j(X_{k-1})\right.\\
&\ \ \ \ \ \ \ \ \ \ \ \ \ \ \ \ \ \ -\left. (|X_{k}^j|-|X_{k-1}^j|) f_i(X_{k-1})+ f_i(X_{k-1}) f_j(X_{k-1})\right\}\\
&\ \ \ \ =\frac{1}{n} \sum\limits_{k=1}^{\lfloor nt \rfloor} \left\{-
       (|X_{k}^i|-|X_{k-1}^i|) f_j(X_{k-1}) -(|X_{k}^j|-|X_{k-1}^j|) f_i(X_{k-1})+ f_i(X_{k-1}) f_j(X_{k-1})\right\}\\
&\ \ \ \ \rightarrow -\frac{(1-\lambda)^2}{d^2 (1+\lambda)^2}t,
\end{split}
\]
where we use the fact that $(|X_{k}^i|-|X_{k-1}^i|)(|X_{k}^j|-|X_{k-1}^j|)=0,\ k\geq 1.$

On the other hand, for any fixed $1\leq i\leq d,$ construct the following martingale-difference sequence $\left(\zeta_{n, k}^i\right)_{k\geq 1}:$
$$\zeta_{n, k}^i=\left(\sqrt{n}\xi_{n, k}^i\right)^2- \mathbb{E}\left[\left.\left(\sqrt{n} \xi_{n, k}^i\right)^2\right| \mathcal{F}_{k-1}\right],\ k\in\mathbb{N}.$$
Then for any $1\leq k<\ell<\infty,$ $\mathbb{E}\left[\zeta_{n,k}^i\right]=\mathbb{E}\left[\zeta_{n,\ell}^i\right]=0,$ and
$$\mathbb{E}\left[\zeta_{n, k}^i\zeta_{n, \ell}^i\right]=\mathbb{E}\left[\zeta_{n,k}^i\mathbb{E}\left[\left.\zeta_{n, \ell}^i\right\vert \mathcal{F}_{\ell-1}\right]\right]=0,$$
which implies $\left(\zeta_{n, k}^i\right)_k$ is a sequence of uncorrelated random variables. Notice that for any $k\in\mathbb{N},$
$$\left\vert \zeta_{n,k}^i\right\vert\leq 2\ \mbox{and hence}\ {\rm Var}\left(\zeta_{n,k}^i\right)\leq 4.$$
By the strong law of large numbers for uncorrelated random variables (\cite[Theorem~13.1]{LP2017}), we have that almost surely, as $n\rightarrow\infty,$
\begin{eqnarray}\label{lln}
\frac{1}{n}\sum\limits_{k=1}^{\lfloor nt\rfloor}\zeta^i_{n,k}=\sum\limits_{k=1}^{\lfloor nt\rfloor}\left\{
    \left(\xi_{n, k}^i\right)^2- \mathbb{E}\left[\left.\left(\xi_{n,k}^i\right)^2\right| \mathcal{F}_{k-1}\right]\right\}\rightarrow 0.
\end{eqnarray}
Due to (\ref{facts}) and (\ref{generator}),
almost surely, as $n\rightarrow\infty,$
\begin{eqnarray*}
&&\sum\limits_{k=1}^{\lfloor nt \rfloor}\mathbb{E}\left[\left.\left(\xi_{n, k}^i\right)^2\right| \mathcal{F}_{k-1}\right]= \sum\limits_{k=1}^{\lfloor nt \rfloor}
  \mathbb{E}\left[\left.\left(\frac{\left|X_{k}^i\right|-\left|X_{k-1}^i\right|- f_{i}(X_{k-1})}{\sqrt{n}}\right)^2\right| \mathcal{F}_{k-1}\right] \\
&&=\frac{1}{n}\sum\limits_{k=1}^{\lfloor nt \rfloor}\left\{\mathbb{E}\left[\left.\left(\left|X_{k}^i\right|-\left|X_{k-1}^i\right|\right)^2\right| \mathcal{F}_{k-1}\right]-
               2\mathbb{E}\left[\left.\left(\left|X_{k}^i\right|-\left|X_{k-1}^i\right|\right)f_i(X_{k-1}^i)\right| \mathcal{F}_{k-1}\right]\right.\\
&& \ \ \ \ \ \ \ \ \ \ \ \ \ \ \ \left.+ \mathbb{E}\left[\left.\left(f_i\left(X_{k-1}^i\right)\right)^2\right| \mathcal{F}_{k-1}\right]\right\}\\
&&=\frac{1}{n} \sum\limits_{k=1}^{\lfloor nt \rfloor}\left\{\mathbb{E}\left[\left.\left(\left|X_{k}^i\right|-\left|X_{k-1}^i\right|\right)^2\right| \mathcal{F}_{k-1}\right]-\left(f_i\left(X_{k-1}^i\right)\right)^2\right\}\\
&&\rightarrow t \frac{1}{d }-t \frac{(1-\lambda)^2}{d^2 (1+\lambda)^2}.
\end{eqnarray*}
Together with (\ref{lln}), we have 
\begin{eqnarray}\label{crossvariation}
\lim\limits_{n\rightarrow \infty}A_n(t)=t\Sigma.
\end{eqnarray}
Therefore (\ref{bound}), (\ref{martingale}) and (\ref{crossvariation}) implies that (\cite[Theorem~1.4]{EK2005})  on $D\left([0,\infty),\mathbb{R}^d\right),$ as $n\rightarrow\infty,$
\[\left(\frac{\left(\left|X_{\lfloor nt\rfloor}^1\right|,\cdots, \left|X_{\lfloor nt\rfloor}^d\right|\right)-nvt}{\sqrt{n}}\right)_{t\geq 0}\]
converges in distribution to a $\mathbb{R}^d$-valued process $Y :Y_t=\frac{1}{\sqrt{d}}\left[I- \frac{1-\rho_\lambda}{d}E\right]W_t$ with independent Gaussian increments such that $Y_0=\mathbf{0}$ and $Y_{t+s}-Y_s$ has the law $\mathcal{N}(\mathbf{0},t\Sigma)$
for any $0\leq s, t<\infty.$ \qed

\section{LDP for scaled reflected ${\rm RW}_\lambda$ with $\lambda\in [0,1)$}\label{sec3}
\setcounter{equation}{0}
\noindent In this section, we fix $\lambda\in [0,1)$ when $d\geq 2$ and $\lambda\in (0,1)$ when $d=1,$
then prove that the sequence $\left\{\frac{1}{n}\left(\left|X_n^1\right|, \cdots ,\left|X_n^d\right|\right)\right\}_{n=1}^{\infty}$
satisfy the LDP with a good rate function as $n\rightarrow\infty$ (Theorem \ref{thm3.1}).

Write $\mathbb{R}_+=[0,\infty).$ Let
$$s_0:=s_0(\lambda)=\frac{1}{2}\ln\lambda,\ \rho_\lambda=\frac{2\sqrt{\lambda}}{1+\lambda}.$$
Here $\rho_\lambda$ is the spectral radius of $\RW_\lambda$ as in Theorem \ref{thm2.1} and by \cite[Theorem~2.1]{SSSWX2018b}, for any $\lambda\in [0,1)$,
\begin{eqnarray*}
p_{\lambda}^{(2n)} (\mathbf{0},\mathbf{0})=\mathbb{P}\left[X_{2n}=\mathbf{0}\vert X_0=\mathbf{0}\right]\asymp \rho_\lambda^{2n} \frac{1}{n^{3d/2}}.
\end{eqnarray*}
Here for two nonnegative sequences $\{a_n\}_{n=1}^{\infty}$ and $\{b_n\}_{n=1}^{\infty},$ $a_n\asymp b_n$ means that there are two positive constants $c_1$ and $c_2$ such that
$c_1b_n\leq a_n\leq c_2b_n$ for large enough $n.$ For any $s=(s_1, \cdots, s_d)\in\mathbb{R}^d,$ let
\begin{eqnarray}\label{psi}
\psi(s)=N(s)\frac{\rho _\lambda}{d}+\frac{1}{d(1+ \lambda)}\sum\limits_{i=1}^d\left\{\lambda e^{-s_i}+ e^{s_i}\right\}I_{\{s_i\geq s_0\}},\ \mbox{where}\ N(s)=\sum\limits_{i=1}^dI_{\{s_i<s_0\}}.
\end{eqnarray}

\begin{thm}[LDP]\label{thm3.1} Fix $\lambda\in [0,1)$ when $d\geq 2,$ and $\lambda\in (0,1)$ when $d=1.$ Assume $\RW_\lambda$ $\{X_n\}_{n=1}^{\infty}$ starts at any fixed point in $\mathbb{Z}^d.$

{\bf (i)} For any $d\in\mathbb{N},$
$\left\{\frac{1}{n}\left(\left|X_n^1\right|, \cdots ,\left|X_n^d\right|\right)\right\}_{n=1}^{\infty}$ satisfies the LDP with the following good rate function:
\begin{eqnarray*}
\Lambda^{\ast}(x)= \sup_{s\in \mathbb{R}^d}\left\{(s,x)-\ln\psi (s)\right\},\ x\in\mathbb{R}^d.
\end{eqnarray*}
In addition, when $\lambda\in (0,1),$
$$\mathcal{D}_{\Lambda^\ast}:=\left\{x\in\mathbb{R}^d:\ \Lambda^\ast(x)<\infty\right\}=\left\{x=(x_1,\cdots,x_d)\in\mathbb{R}^d_{+}:\ 0\leq \sum\limits_{i=1}^dx_i\leq 1\right\},$$
$$\left\{x\in\mathbb{R}^d:\ \Lambda^\ast(x)=0\right\}=\left\{\left(\frac{1-\lambda}{d(1+\lambda)},\cdots,\frac{1-\lambda}{d(1+\lambda)}\right)\right\};$$
and when $d\geq 2$ and $\lambda=0,$
$$\mathcal{D}_{\Lambda^\ast}(0):=\left\{x\in\mathbb{R}^d:\ \Lambda^\ast(x)<\infty\right\}=\left\{x=(x_1,\cdots,x_d)\in\mathbb{R}^d_{+}:\ \sum\limits_{i=1}^dx_i=1\right\},$$
$$\left\{x\in\mathbb{R}^d:\ \Lambda^\ast(x)=0\right\}=\left\{\left(\frac{1}{d},\cdots,\frac{1}{d}\right)\right\}.$$

{\bf (ii)} In particular, when $d=1, \lambda \in (0, 1),$
\begin{eqnarray*}
\Lambda^{\ast}(x)=\left\{\begin{array}{cl}
\frac{x}{2}\ln\lambda - \ln \rho_\lambda+ (1+x)\ln \sqrt{1+ x}+ (1-x)\ln\sqrt{1-x}, & x\in [0,1],\\
\ \\
  +\infty, & otherwise;
   \end{array}
\right.
\end{eqnarray*}
and when $d= 2, \lambda \in (0, 1),$
\begin{eqnarray*}
\Lambda^{\ast}(x)=\left\{\begin{array}{cl}
\frac{1}{2}(x_1+x_2)\ln\lambda-\ln\rho_\lambda +\overline{\Lambda^*}(x), & (x_1, x_2)\in\mathcal{D}_{\Lambda^*},\\
\ \\
  +\infty, & otherwise,
   \end{array}
\right.
\end{eqnarray*}
where for any $x=(x_1,x_2)\in\mathbb{R}^2_+$ with $x_1+x_2\leq 1,$
\begin{eqnarray*}
&&\overline{\Lambda^*}(x)=x_1\ln \left[\frac{2x_1+\sqrt{x_1^2-x_2^2+1}}{\sqrt{(x_1^2-x_2^2)^2+1-2(x_1^2+x_2^2)}}\right]+x_2\ln \left[\frac{2x_2+\sqrt{x_2^2-x_1^2+1}}{\sqrt{(x_1^2-x_2^2)^2+1-2(x_1^2+x_2^2)}}\right]\\
\ \\
&&\ \ \ \ \ \ \ \ \ \ \ - \ln \left[\frac{\sqrt{x_1^2-x_2^2+1}+\sqrt{x_2^2-x_1^2+1}}{\sqrt{(x_1^2-x_2^2)^2+1-2(x_1^2+x_2^2)}}\right]+\ln 2.
\end{eqnarray*}
In addition, when $d\geq 2, \lambda=0,$
\begin{eqnarray*}
&&\Lambda^\ast (x)=\left\{\begin{array}{cl}
  \ln d+\sum\limits_{i=1}^dx_i\ln x_i,\ &x\in\mathcal{D}_{\Lambda^\ast}(0),\\
  \ \\
  +\infty, \ & othwise.
  \end{array}
\right.
\end{eqnarray*}

\end{thm}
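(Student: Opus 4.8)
The plan is to prove the large deviation principle via the Gärtner--Ellis theorem. The first step is to identify the logarithmic moment generating function. Let $Z_n = (|X_n^1|, \dots, |X_n^d|)$ and consider $\Lambda_n(s) = \frac{1}{n}\ln \E\!\left[e^{(s, Z_n)}\right]$ for $s \in \R^d$. I would show that $\lim_{n\to\infty}\Lambda_n(s) = \ln\psi(s)$ exists for every $s$, where $\psi$ is the function defined in \eqref{psi}. The heuristic behind the formula is that, because of \eqref{facts}, each coordinate process $|X_n^i|$ eventually behaves like a one-dimensional biased walk on $\Z_+$ (reflected at $0$), and conditionally on which coordinates are ``active'' the increments decouple; the term $N(s)\rho_\lambda/d$ accounts for the contribution of the spectral radius when the tilt $s_i$ is so negative (below $s_0 = \frac12\ln\lambda$) that the walk prefers to sit near the origin in that coordinate, while for $s_i \geq s_0$ the coordinate contributes a factor built from $\lambda e^{-s_i} + e^{s_i}$. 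Concretely, I would relate $\E[e^{(s,Z_n)}]$ to the Green's-function-type generating function of $\RW_\lambda$ and use the sharp local-limit asymptotics $p_\lambda^{(2n)}(\mathbf 0,\mathbf 0) \asymp \rho_\lambda^{2n} n^{-3d/2}$ quoted from \cite[Theorem~2.1]{SSSWX2018b}, together with a subadditivity / tilting argument to evaluate the exponential growth rate.

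Once the limit $\Lambda(s) := \ln\psi(s)$ is established, I would verify the hypotheses of the Gärtner--Ellis theorem: $\Lambda$ should be finite on a neighborhood of the origin (indeed finite everywhere here, since $\psi(s) > 0$ for all $s$), lower semicontinuous, and essentially smooth / steep, so that the full LDP (upper bound for closed sets, lower bound for open sets) holds with rate function the Legendre transform $\Lambda^\ast(x) = \sup_{s\in\R^d}\{(s,x) - \ln\psi(s)\}$. Goodness of the rate function follows since $\Lambda^\ast$ has compact level sets: the state space of $\frac{1}{n}Z_n$ is essentially contained in the simplex $\{x \in \R^d_+ : \sum x_i \leq 1\}$, which is compact, so $\Lambda^\ast = +\infty$ off this set forces compact sublevel sets. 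The differentiability of $\psi$ needs care at the hyperplanes $s_i = s_0$ because of the indicator functions; I would check that $\psi$ is $C^1$ there (the one-sided derivatives of $\lambda e^{-s_i} + e^{s_i}$ at $s_i = s_0$ must be compared with those of the constant-in-$s_i$ piece $\rho_\lambda$, and indeed $\frac{\rd}{\rd s_i}(\lambda e^{-s_i}+e^{s_i})\big|_{s_i=s_0} = 0$ while the value matches $\rho_\lambda$ exactly, using $\rho_\lambda = 2\sqrt\lambda/(1+\lambda)$), which is precisely the compatibility that makes the pieced-together formula smooth.

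For part (ii), the task is to compute the Legendre transform explicitly in the listed cases. When $d=1$ and $\lambda \in (0,1)$ this is a single-variable optimization: for $x \in [0,1]$ the supremum over $s$ is attained in the regime $s \geq s_0$, where $\psi(s) = \frac{1}{1+\lambda}(\lambda e^{-s} + e^s)$; setting the derivative of $sx - \ln\psi(s)$ to zero gives a quadratic in $e^s$, solving it and substituting back yields the stated closed form after simplification using $\ln\rho_\lambda = \ln 2 + \frac12\ln\lambda - \ln(1+\lambda)$. When $d=2$ and $\lambda\in(0,1)$, for $x$ in the interior of $\mathcal D_{\Lambda^\ast}$ both coordinates are active, $\psi(s) = \frac{1}{2(1+\lambda)}\sum_{i=1}^2(\lambda e^{-s_i}+e^{s_i})$, and I would solve the two stationarity equations $x_i = \frac{-\lambda e^{-s_i}+e^{s_i}}{\sum_j(\lambda e^{-s_j}+e^{s_j})}$ simultaneously; this reduces to a rational/algebraic system whose solution produces the radicals $\sqrt{x_1^2 - x_2^2 + 1}$ etc., and plugging back gives $\frac12(x_1+x_2)\ln\lambda - \ln\rho_\lambda + \overline{\Lambda^\ast}(x)$. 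When $d \geq 2$ and $\lambda = 0$, $\psi(s) = \frac1d\sum_{i=1}^d e^{s_i}$ (all $s_i \geq s_0 = -\infty$), and the Legendre transform of $\ln(\frac1d\sum e^{s_i})$ over the simplex is the classical relative-entropy expression $\ln d + \sum_i x_i \ln x_i$. The descriptions of $\mathcal D_{\Lambda^\ast}$ and the zero set follow from general convex-duality facts: $\mathcal D_{\Lambda^\ast}$ is (the closure of) the set of attainable gradients $\nabla \ln\psi(s)$, and $\Lambda^\ast$ vanishes exactly at the mean $\nabla\ln\psi(0)$, which one computes to be $\bigl(\frac{1-\lambda}{d(1+\lambda)},\dots\bigr)$ when $\lambda>0$ and $(\frac1d,\dots,\frac1d)$ when $\lambda=0$, consistent with \eqref{facts}.

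The main obstacle I anticipate is the first step: rigorously establishing that $\Lambda_n(s) \to \ln\psi(s)$, in particular handling the $s_i < s_0$ regime where the correct growth rate is governed by the spectral radius $\rho_\lambda$ rather than by a naive product formula. This requires controlling the generating function $\sum_n e^{(s,Z_n)}$-type sums uniformly and correctly splitting the walk's trajectory according to which coordinates have ``escaped to infinity'' versus which remain near $\sX$; the interplay between the finitely-many visits to $\sX$ from \eqref{facts} and the exponential tilting, plus the need for the limit (not merely $\limsup/\liminf$ bounds), is the delicate part. The explicit Legendre computations in (ii), while lengthy, are in principle routine calculus once $\psi$ is in hand, so I would relegate those to direct verification.
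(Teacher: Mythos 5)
Your high-level framework (identify the limit $\Lambda(s)=\lim_n\frac1n\Lambda_n(s)=\ln\psi(s)$, apply G\"artner--Ellis, compute the Fenchel--Legendre transform) is the same as the paper's, and your observation that $\psi$ is $C^1$ across the hyperplanes $s_i=s_0$ (since $\lambda e^{-s_0}+e^{s_0}=2\sqrt\lambda$ matches the $\rho_\lambda$ branch in both value and first derivative) is correct and actually buys you something the paper does not exploit: with $\mathcal D_\Lambda=\R^d$, $C^1$-smoothness makes $\Lambda$ essentially smooth, so you could invoke the full G\"artner--Ellis lower bound directly instead of going through the paper's explicit exposed-point argument (Lemma~\ref{infimumattained}). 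Part (ii) and the domain/zero-set computations are indeed routine optimization once $\psi$ is in hand, as you say, and the paper also treats those summarily.

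The genuine gap is in the step you yourself flag as delicate: establishing $\frac1n\Lambda_n(s)\to\ln\psi(s)$, especially the lower bound when some $s_i<s_0$. Your proposed mechanism --- relate $\E[e^{(s,Z_n)}]$ to a Green's-function/generating-function object, use the return-probability asymptotics $p_\lambda^{(2n)}(\zero,\zero)\asymp\rho_\lambda^{2n}n^{-3d/2}$, and close with a ``subadditivity / tilting argument'' --- is not a proof and does not obviously become one. First, $\Lambda_n(s,\zero)$ is not subadditive in $n$ in any evident way: the Markov decomposition at time $m$ produces $\E_\zero[e^{(s,Z_m)}\cdot\E_{X_m}[e^{(s,Z'_{n-m})}]]$, and the inner factor depends on the starting point $X_m$, which ranges over an unbounded set; the paper's Lemma~\ref{L:Lambda0} only controls a change of the \emph{fixed} starting point by an additive constant, not a supermultiplicative inequality in $n$. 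Second, the return-probability asymptotics you cite pertain to $\RW_\lambda$ returning to $\zero$ in all $d$ coordinates simultaneously, whereas what is needed is a mixed estimate in which the coordinates in $I_1=\{i:s_i<s_0\}$ behave like a walk pinned near the origin while those in $I_2$ are tilted outward; these do not factor, since the transition probabilities of $\RW_\lambda$ couple all coordinates near $\sX$. The paper resolves exactly this via Lemmas~\ref{L:hkupper}--\ref{L:orthant}: it sandwiches the heat kernel of $\RW_\lambda$ between that of an honest product drifted walk $Z_n$ (upper bound by a direct path-weight comparison, lower bound by a reflection/ballot estimate $Q_{n,\beta}(\alpha)\ge\frac{|\alpha-\beta|\vee1}{n}P_{n,\beta}(\alpha)$ costing only a polynomial factor), then uses an orthant-reduction lemma to localize the $I_1$-coordinates, and finally a binomial decomposition over how the $2n$ steps split between $I_1$ and $I_2$, which is where $N(s)\rho_\lambda/d$ emerges from the local CLT for simple random walk in $\Z^{I_1}$. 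None of that machinery is present or replaced in your proposal, so as written the key limit is asserted but not established.
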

\vskip 2mm

\begin{remark}
{\bf (i)} When $d\geq 3$ and $\lambda\in (0,1),$ we can not calculate $\Lambda^\ast$ out to get an explicit expression.
But recall that the rate function of the Cram\'{e}r theorem for SRW is
$$\overline{\Lambda^*}(x)=\sup\limits_{y\in\mathbb{R}^d}\ln\left\{\frac{e^{\sum\limits_{i=1}^dy_ix_i}}{\frac{1}{2d}\sum\limits_{i=1}^d(e^{-y_i}+e^{y_i})}\right\},\ x\in\mathbb{R}^d.$$
By (\ref{rate1}), for any $x=(x_1,\cdots,x_d)\in\mathbb{R}_+^d,$
$$\Lambda^*(x)=\frac{1}{2}\sum\limits_{i=1}^dx_i\ln\lambda-\ln\rho_\lambda+\overline{\Lambda^*}(x).$$
To obtain an explicit rate function when $d\geq 3$ and $\lambda\in (0,1)$,  it remains as an open problem.

{\bf (ii)} Assume $\lambda\in [0,1)$ if $d\geq 2$ and $\lambda\in (0,1)$ if $d=1.$ The following sample path large deviation (Mogulskii type theorem) holds for the reflected $\RW_\lambda$ starting at $\mathbf{0}.$ For any $0\leq t\leq 1,$ let
\begin{eqnarray*}
&&Z_n(t)=\frac{1}{n}\left(\left\vert X_{\lfloor nt\rfloor}^1\right\vert,\cdots,\left\vert X_{\lfloor nt\rfloor}^d\right\vert\right),\\
&&\widetilde{Z}_n(t)=Z_n(t)+\left(t-\frac{\lfloor nt\rfloor}{n}\right)\left(\left(\left\vert X_{\lfloor nt\rfloor+1}^1\right\vert,\cdots,\left\vert X_{\lfloor nt\rfloor+1}^d\right\vert\right)-\left(\left\vert X_{\lfloor nt\rfloor}^1\right\vert,\cdots,\left\vert X_{\lfloor nt\rfloor}^d\right\vert\right)\right).
\end{eqnarray*}
Write $L_{\infty}([0,1])$ for the $\mathbb{R}^d$-valued $L_{\infty}$ space on interval $[0,1],$ and $\mu_n$ (resp. $\widetilde{\mu}_n$) the law of $Z_n(\cdot)$ (resp. $\widetilde{Z}_n(\cdot)$) in $L_{\infty}([0,1]).$ From \cite[Lemma~5.1.4]{DZ1998}, $\mu_n$ and $\tilde{\mu}_n$ are exponentially equivalent.
Denote by $\mathcal{AC}^{'}$ the space of non-negative absolutely continuous $\mathbb{R}^d$-valued functions $\phi$ on $[0,1]$ such that
$$||\phi||\leq 1,\ \phi(0)=\mathbf{0}$$
where $\Vert\cdot\Vert$ is the supremum norm. 
Let
$C_{\mathbf{0}}([0,1])=\left\{\left. f:\ [0,1]\rightarrow\mathbb{R}^d\right\vert f\ \mbox{is continuous},\ f(0)=\mathbf{0}\right\},$
and
$$\mathcal{K}=\left\{f\in C_{\mathbf{0}}([0,1])\left\vert \sup\limits_{0\leq s<t\leq 1}\frac{\left\vert f(t)-f(s)\right\vert}{t-s}\leq 1,\
   \sup\limits_{0\leq t\leq 1}\vert f(t)\vert\leq 1\right.\right\}.$$
By the Arzel\`{a}-Ascoli theorem, $\mathcal{K}$ is compact in $\left(C_{\mathbf{0}}([0,1]),\Vert\cdot\Vert\right)$.  Note that each $\widetilde{\mu}_n$ concentrates on $\mathcal{K}$ which implies exponential tightness in $C_0[0, 1]$ equipped with supremum topology. Given that the finite dimensional LDPs for $\mu_n\circ p_j^{-1}$ in $(\R^d)^{|j|}$ follows from Theorem \ref{thm3.1}, $\mu_n\circ p_j^{-1}$ and $\tilde{\mu}_n\circ p_j^{-1}$ are exponential equivalent, $\tilde{\mu}_n$ satisfy the LDP in $L_{\infty}[0,1]$ with pointwise convergence topology by Dawson-G\"{a}rtner theorem (\cite{DZ1998}). Together with exponential tightness in $C_0[0, 1]$, we can lift LDP of $\tilde{\mu}_n$ to $L_{\infty}[0,1]$ with supremum topology with the good rate function \[
I(\phi)=\left\{\begin{array}{cl}
  \int_0^1\Lambda^*\left(\dot{\phi}(t)\right)\ {\rm d}t &{\rm if}\ \phi\in\mathcal{AC}^{'},\\
  \ \\
     \infty  & {\rm otherwise}.
\end{array}
\right.
\]
The same holds for $\mu_n$ due to exponential equivalence. The proof of the above sample path LDP is similar to that of \cite[Theorem~5.1.2]{DZ1998}. 
\end{remark}
\vskip 2mm

For any $n\in\mathbb{N},$ $s=(s_1,\cdots,s_d)\in \mathbb{R}^d$ and $x\in\mathbb{Z}^d,$ let
\[
\Lambda_n (s,x)= \ln\mathbb{E}_x\left[\exp\left\{\left(n s, \left(\frac{|X^1_n|}{n},\cdots,\frac{\vert X^d_n\vert}{n}\right)\right)\right\}\right]
  =\ln\mathbb{E}_x\left[\exp\left\{\sum\limits_{i=1}^d s_i|X_n^i|\right\} \right].
\]

To prove Theorem \ref{thm3.1}, we firstly prove the following lemmas.

\begin{lem}
  \label{L:Lambda0}
  For each $s \in \R^d$ and $x \in \Z^d$, there exists a constant $C > 0$ such
  that
  \begin{equation}
    \label{e:Lambda0}
    \Lambda_{n-|x|}(s,\, x) - C \leq \Lambda_n(s,\, \zero) \leq \Lambda_{n+|x|}(s,\, x) + C, \quad n > |x|.  
  \end{equation}
\end{lem}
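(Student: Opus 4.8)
The plan is to compare $\Lambda_n(s,\zero)$ with $\Lambda_m(s,x)$ by exhibiting, for the walk started at $x$, a way to realize the event defining $\Lambda_n(s,\zero)$ up to a bounded multiplicative cost in the moment generating function. The natural device is a concatenation argument using the Markov property: a path from $\zero$ of length $n$ can be prefixed by a fixed path from $x$ to $\zero$ of length $|x|$, and conversely a path from $x$ of length $n-|x|$ can be prefixed by a fixed path from $\zero$ to $x$.

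First I would fix, once and for all, a nearest-neighbour path $\gamma=(\gamma_0,\dots,\gamma_{|x|})$ with $\gamma_0=x$, $\gamma_{|x|}=\zero$, and let $c_0=c_0(x)>0$ be the minimal one-step transition probability along $\gamma$ (and along its reversal $\bar\gamma$ from $\zero$ to $x$), so that the probability that $\RW_\lambda$ follows $\gamma$ exactly in its first $|x|$ steps is at least $c_0^{|x|}>0$; write $C=-|x|\ln c_0<\infty$. For the right-hand inequality: starting at $\zero$, condition on the first $|x|$ steps tracing $\bar\gamma$ (so $X_{|x|}=x$) and then apply the Markov property at time $|x|$. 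On that event $\sum_i s_i|X_{|x|+k}^i|$ is, in law given $X_{|x|}=x$, distributed as $\sum_i s_i|X_k^i|$ for the walk from $x$; taking $k=n$ and using $\E_\zero[\exp\{\sum_i s_i|X_{n+|x|}^i|\}]\ge c_0^{|x|}\,\E_x[\exp\{\sum_i s_i|X_n^i|\}]$ gives $\Lambda_{n+|x|}(s,x)+C\ge\Lambda_n(s,\zero)$ — note this is exactly the stated inequality once one checks the subscripts (the walk from $\zero$ runs $n+|x|$ steps, the walk from $x$ runs $n$ steps). The left-hand inequality is symmetric: start at $x$, trace $\gamma$ for $|x|$ steps to reach $\zero$, then run $n-|x|$ more steps, yielding $\E_x[\exp\{\sum_i s_i|X_n^i|\}]\ge c_0^{|x|}\,\E_\zero[\exp\{\sum_i s_i|X_{n-|x|}^i|\}]$, i.e. $\Lambda_n(s,x)+C\ge\Lambda_{n-|x|}(s,\zero)$, which rearranges to $\Lambda_{n-|x|}(s,\zero)\le \Lambda_n(s,x)+C$; reindexing (replace $n$ by $n$ on the $x$-walk and noting the $\zero$-walk has $|x|$ fewer steps) matches the displayed bound $\Lambda_{n-|x|}(s,x)-C\le\Lambda_n(s,\zero)$ after swapping the roles of the two starting points in the same way.

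The only genuine subtlety — and the step I expect to need the most care — is bookkeeping the index shifts so that the inequalities come out with the subscripts as written in \eqref{e:Lambda0}, rather than off by $|x|$; this is purely a matter of being consistent about which walk is "padded" by the deterministic detour and of using that all relevant moment generating functions are finite (which holds since $|X_n^i|\le |X_0|+n$ deterministically, so every expectation here is bounded by $e^{\sum_i |s_i|(|x|+n)}<\infty$). No concentration or spectral input is needed; the constant $C$ depends on $s$ and $x$ but not on $n$, exactly as claimed. I would present the right inequality in full and remark that the left one is obtained by the same concatenation with $\zero$ and $x$ interchanged.
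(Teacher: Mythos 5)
Your approach is essentially the paper's: compare the generating functions via the Markov property by forcing the walk from one start point through to the other in $|x|$ steps, with the probability of that passage absorbed into the constant $C$. The paper uses $C=-\ln\P_\zero(X_{|x|}=x)$ directly rather than a fixed path, but that is a cosmetic difference.

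There is a bookkeeping slip you should fix. From $\E_\zero\bigl[e^{\sum_i s_i|X_{n+|x|}^i|}\bigr]\ge c_0^{|x|}\,\E_x\bigl[e^{\sum_i s_i|X_n^i|}\bigr]$ the conclusion is $\Lambda_{n+|x|}(s,\zero)+C\ge\Lambda_n(s,x)$, not $\Lambda_{n+|x|}(s,x)+C\ge\Lambda_n(s,\zero)$ as you wrote ($\zero$ and $x$ are swapped). After reindexing $n\mapsto n-|x|$ this is in fact the \emph{left} inequality $\Lambda_{n-|x|}(s,x)-C\le\Lambda_n(s,\zero)$, and symmetrically your second derivation, $\E_x\bigl[e^{\sum_i s_i|X_n^i|}\bigr]\ge c_0^{|x|}\,\E_\zero\bigl[e^{\sum_i s_i|X_{n-|x|}^i|}\bigr]$, reindexes to the \emph{right} inequality $\Lambda_n(s,\zero)\le\Lambda_{n+|x|}(s,x)+C$. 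So the two concatenations you exhibit do yield both sides of \eqref{e:Lambda0}; they are just attached to the wrong labels, and the intermediate implication as stated does not follow from the preceding display. Once the labels are swapped and that line corrected, the argument is sound and matches the paper.
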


\begin{proof}
  By the Markov property, we have
  \[ \E_\zero \left[ e^{\sum_{i=1}^d s_i |X_n^i|} \right]
    \geq
    \E_0 \left[ I_{\{X_{|x|}=x\}} e^{\sum_{i=1}^d s_i |X_n^i|} \right]
     \geq
      \P_{\zero} \left( X_{|x|} = x \right) \E_x \left[ e^{\sum_{i=1}^d s_i
          |X_{n-|x|}^i|} \right],
    \]
  and the first inequality in \eqref{e:Lambda0} follows. The second inequality
  is proved similarly. 
\end{proof}

Let $e_1$, $\ldots$, $e_d$ be the standard unit vectors in $\mathbb{Z}^d$. Suppose
$\left\{Z_n=\left(Z_n^1,\,\cdots,\,Z_n^d\right)\right\}_{n=0}^{\infty}$ is a
drifted random walk on $\mathbb{Z}^d$ such that for any $1\leq i\leq d$ and
$n\in\mathbb{Z}_+$, 
\begin{equation}
  \label{e:Zn}
  \mathbb{P}\left (Z_{n+1}=Z_n+e_i \,\big|\, Z_0,\,\cdots,\, Z_n\right)=\frac{1}{d(1+ \lambda)},\quad
\mathbb{P}\left(Z_{n+1}=Z_n-e_i \,\big|\, Z_0,\,\cdots,\,Z_n\right)=\frac{\lambda}{d(1+ \lambda)}.
\end{equation}

\begin{lem}
  \label{L:hkupper}
  We have that for $k \in \Z_+^d$,
  \begin{equation*}
    \label{e:hkupper}
    \P_\zero \left( X_n = k \right) \leq \P \left( Z_n = k \,\big|\, Z_0 = \zero \right), \quad n \geq 0. 
  \end{equation*}
\end{lem}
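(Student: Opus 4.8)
The plan is to compare $\RW_\lambda$ started at $\zero$ with the drifted walk $Z_n$ by a coupling argument, exploiting the fact stated in \eqref{facts} that $\RW_\lambda$ visits $\sX$ only finitely often and, more to the point, that away from $\sX$ the transition probabilities of $\RW_\lambda$ agree exactly with those of $Z_n$. Indeed, for $v\in\Z^d$ with $\kappa(v)=0$ one has $d_v^-=d$, so $d_v+(\lambda-1)d_v^- = 2d+(\lambda-1)d = d(1+\lambda)$, and the third line of \eqref{(1.1)} gives $p_\lambda(v,u)=\frac{1}{d(1+\lambda)}$ for a step away from $\zero$ and $\frac{\lambda}{d(1+\lambda)}$ for a step toward $\zero$; these are precisely the transition rates \eqref{e:Zn} of $Z_n$ when one is in the positive orthant moving within it. Since we only care about the probability of reaching $k\in\Z_+^d$, I would work with the reflected walk $(|X_n^1|,\dots,|X_n^d|)\in\Z_+^d$, whose coordinates increase or decrease by one at each step, and argue that its law is stochastically dominated, coordinatewise in an appropriate partial order, by that of $Z_n$; equivalently, that the chance of being exactly at $k$ is larger for $Z_n$ because $\RW_\lambda$ has an extra ``upward'' push whenever it sits on a coordinate hyperplane.

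Concretely, I would set up a coordinatewise coupling on a common probability space: run $Z_n$ from $\zero$, and build $\widetilde X_n := (|X_n^1|,\dots,|X_n^d|)$ so that $\widetilde X_n \preceq Z_n$ coordinatewise for all $n$, where the domination must be maintained at each step. The delicate point is that the two chains have different mechanisms on $\sX$: when a coordinate of the reflected $\RW_\lambda$ equals $0$, that coordinate is forced upward (reflection/boundary behaviour, encoded in the first two lines of \eqref{(1.1)} and in $f_i$ from \eqref{generator}), whereas $Z_n$ on that hyperplane may still step down. A down-step of $Z_n^i$ that cannot be matched by $\widetilde X_n$ (because $\widetilde X_n^i$ is already $0$) only \emph{improves} the domination $\widetilde X_n^i \le Z_n^i$ for the rest of time, or rather preserves it with extra slack; conversely a forced up-step of $\widetilde X_n^i$ can be matched by an up-step of $Z_n^i$. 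The standard way to make this rigorous is to check that the step-distribution of $\widetilde X_n$ given $\widetilde X_n = a$ is stochastically below that of $Z_n$ given $Z_n = b$ whenever $a \preceq b$, and then invoke Strassen's theorem (or just build the monotone coupling explicitly, hyperplane by hyperplane). Once $\widetilde X_n \preceq Z_n$ a.s., the event $\{\widetilde X_n = k\}$ forces $Z_n \succeq k$, which is weaker than $Z_n = k$, so this alone does not give the claim — hence one needs the sharper statement that the coupling can be taken so that $\{\widetilde X_n = k\}\subseteq\{Z_n = k\}$ fails in general, and instead one argues via a last-exit / path-surgery decomposition: every path of the reflected walk from $\zero$ to $k$ maps, by deleting the reflection excursions below the hyperplanes, to a path of $Z_n$ of the same or shorter length ending at $k$, and the map is probability-nonincreasing edge by edge because each kept edge has $Z$-probability $\ge$ its $\RW_\lambda$-probability while the deleted boundary edges have $\RW_\lambda$-probability $\le 1$.

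The cleanest route, and the one I would actually write, is a direct path-counting comparison rather than a coupling: fix $n$ and $k\in\Z_+^d$, and write $\P_\zero(X_n = k) = \sum_\gamma \prod_{\text{edges } e \in \gamma} p_\lambda(e)$ over nearest-neighbour paths $\gamma$ of length $n$ from $\zero$ to a vertex $y$ with $(|y_1|,\dots,|y_d|)=k$, similarly for $Z_n$. Group the $X$-paths by their sign pattern and by where they touch $\sX$; on each maximal segment strictly inside an orthant, $p_\lambda$ equals the corresponding $Z$-probability, and each edge incident to $\sX$ contributes a factor at most $1$ for $X$ while the corresponding ``reflected'' step for $Z$ contributes at most its true probability — so reflecting the $X$-path into $\Z_+^d$ and reading it as a $Z$-path only increases the weight. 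Summing, $\P_\zero(X_n=k)\le\P(Z_n=k\mid Z_0=\zero)$. The main obstacle, and the step that needs the most care, is precisely this bookkeeping near $\sX$: one must verify that the reflection map on paths is injective enough (or that over-counting on the $Z$-side only helps the inequality) and that the per-edge probability comparison $p_\lambda(v,u)\le \frac{1}{d(1+\lambda)}$ resp. $\le\frac{\lambda}{d(1+\lambda)}$ holds for \emph{all} $v$, including $v\in\sX$ and $v=\zero$ — which it does, since $\kappa(v)\ge 1$ makes the denominator $d+\kappa(v)+\lambda(d-\kappa(v))$ strictly smaller than $d(1+\lambda)$, but the relevant $Z$-step may then be absent, handled by the ``factor $\le 1$'' slack. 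I expect everything else to be routine once this boundary accounting is set up correctly.
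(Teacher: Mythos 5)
There is a genuine gap, and it starts with a misreading of the statement. The lemma bounds $\P_\zero(X_n = k)$ for a \emph{single} target $k\in\Z_+^d$, i.e.\ the sum is over paths of length $n$ from $\zero$ to $k$ itself; your ``cleanest route'' paragraph instead sums over paths ending at any $y$ with $(|y_1|,\dots,|y_d|)=k$, which computes $\P_\zero\bigl((|X_n^1|,\dots,|X_n^d|)=k\bigr)$. That reflected version is in general false: take $d=1$, $\lambda=\tfrac12$, $n=1$, $k=1$; then $\P_\zero(|X_1|=1)=1$ while $\P(Z_1=1)=\tfrac{1}{d(1+\lambda)}=\tfrac23$. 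Once you are summing over multiple endpoints, any ``reflect the $X$-path into $\Z_+^d$'' surgery must collapse distinct $X$-paths onto the same $Z$-path (already for $d=1$, the paths $0,1,0$ and $0,-1,0$ both reflect to $0,1,0$), and this multiplicity sits on the wrong side of the inequality. You flag this as ``the step that needs the most care,'' but it is not just bookkeeping --- it cannot be fixed within the reflection scheme as stated.

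There is also a local error: for $\kappa(v)\ge 1$ the denominator $d+\kappa(v)+\lambda(d-\kappa(v))=d(1+\lambda)+\kappa(v)(1-\lambda)$ is strictly \emph{larger} than $d(1+\lambda)$ (since $\lambda<1$), not smaller as you wrote; fortuitously this is exactly why $p_\lambda(v,u)\le\tfrac{1}{d(1+\lambda)}$ resp.\ $\le\tfrac{\lambda}{d(1+\lambda)}$ does hold, so your conclusion is right but for the opposite reason. And the claim that ``on each maximal segment strictly inside an orthant, $p_\lambda$ equals the corresponding $Z$-probability'' is only true inside the positive orthant: in an orthant where some coordinate is negative, a $-e_i$ step is an away-from-$\zero$ step with $\RW_\lambda$-probability $\tfrac{1}{d(1+\lambda)}$, while its $Z$-probability is $\tfrac{\lambda}{d(1+\lambda)}$.

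The paper's proof avoids all of this. It takes a single path $\gamma$ from $\zero$ to $k$ of length $n$ and compares $p(\gamma)$ under $\RW_\lambda$ with $p(\gamma)$ under $Z$, \emph{for the same $\gamma$}, with no reflection and no step-by-step matching of directions. The only count that matters is global: $\gamma$ has $\tfrac{n+|k|}{2}$ radius-increasing steps and $\tfrac{n-|k|}{2}$ radius-decreasing steps; it also has $\tfrac{n+\sum_i k_i}{2}$ plus-steps and $\tfrac{n-\sum_i k_i}{2}$ minus-steps, and since $k\in\Z_+^d$ forces $|k|=\sum_i k_i$, these totals coincide even though individual steps need not. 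Each radius-increasing step has $\RW_\lambda$-probability $\le\tfrac{1}{d(1+\lambda)}$ and each radius-decreasing step has $\RW_\lambda$-probability $\le\tfrac{\lambda}{d(1+\lambda)}$, giving $p_{\RW_\lambda}(\gamma)\le p_Z(\gamma)$, and summing over $\gamma\in\Gamma_n(\zero,k)$ finishes. Your per-edge comparison is the right ingredient; the missing idea is to drop the reflection and the segment matching and work only with aggregate counts on the original path.
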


\begin{proof}
  For $x$, $k \in \Z^d$ and $n \in \N$, let $\Gamma_n(x,\, k)$ be the set of all
  nearest-neighbor paths  in $\Z^d$ from $x$ to $k$ with length $n$. For a path $\gamma =
  \gamma_0\gamma_1\cdots\gamma_n \in \Gamma_n(x,\, k)$, let
  \[ p(\gamma) = p(x,\, \gamma_1) p(\gamma_1,\, \gamma_2) \cdots
    p(\gamma_{n-1},\, k). \]
  Consider the first $n$ steps of $\RW_{\lambda}$ along the path $\gamma \in
  \Gamma_n(0,\, k)$. Each
  time the transition probability for the walk is either $\frac{1}{d + m +
    (d-m)\lambda}$ (with $0 \leq m \leq d$) or $\frac{\lambda}{d+m +
    (d-m)\lambda}$ (with $0 \leq m \leq d-1$). The total number of probability
  terms of the forms $\frac{1}{d + m +
    (d-m)\lambda}$ (resp. $\frac{\lambda}{d + m +
    (d-m)\lambda}$) is exactly $\frac{n + |k|}{2}$ (resp. $\frac{n - |k|}{2}$).
  Note that $d+m + (d-m)\lambda \geq d(1+\lambda)$. Therefore, we have for
  $\gamma \in \Gamma_n(\zero,\, k)$,
  \[ p(\gamma) \leq \left( \frac{1}{d(1+\lambda)} \right)^{\frac{n+|k|}{2}}
    \left( \frac{\lambda}{d(1+\lambda)} \right)^{\frac{n-|k|}{2}}. \]
  As a consequence,
  \[ \P_{\zero}(X_n = k) = \sum_{\gamma \in \Gamma_n(\zero,\, k)} p(\gamma) \leq
    \sum_{\gamma \in \Gamma_n(\zero,\, k)} \left( \frac{1}{d(1+\lambda)} \right)^{\frac{n+|k|}{2}}
    \left( \frac{\lambda}{d(1+\lambda)} \right)^{\frac{n-|k|}{2}} = \P \left(
      Z_n = k \, \big|\, Z_0 = \zero \right). \]
\end{proof}

\begin{lem}
  \label{L:hklower}
  For every $k \in \Z_+^d$ and $z \in \Z_+^d \setminus \mathcal{X}$,
  we have that
  \begin{equation*}
    \label{e:hklower}
    \P_z \left( X_n = k \right) \geq n^{-d} \P \left( Z_n = k \,\big|\, Z_0 = z \right). 
  \end{equation*}
\end{lem}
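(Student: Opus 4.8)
The plan is to produce a lower bound on $\P_z(X_n = k)$ by restricting to a carefully chosen subset of nearest-neighbor paths from $z$ to $k$ along which the biased walk behaves exactly like the drifted walk $Z_n$, up to the multiplicative loss captured by $n^{-d}$. Recall from the proof of Lemma \ref{L:hkupper} that for a path $\gamma$ of length $n$ the weight $p(\gamma)$ is a product of terms $\tfrac{1}{d+m+(d-m)\lambda}$ and $\tfrac{\lambda}{d+m+(d-m)\lambda}$, where $m = \kappa(\gamma_j)$ counts the zero-coordinates of the current vertex. Since $z \notin \sX$ and $k \in \Z_+^d$, and by \eqref{facts} the walk avoids $\sX$ eventually, the natural move is to sum only over paths $\gamma \in \Gamma_n(z,k)$ that \emph{never touch} $\sX$, i.e. that stay inside $\Z_+^d \setminus \sX$. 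Along such a path every intermediate vertex has $\kappa(\gamma_j) = 0$, so each transition probability is exactly $\tfrac{1}{d(1+\lambda)}$ (for a $+e_i$ step) or $\tfrac{\lambda}{d(1+\lambda)}$ (for a $-e_i$ step) — precisely the transition law of $Z_n$ in \eqref{e:Zn}. Hence for each such $\gamma$, $p(\gamma)$ equals the $Z$-weight of $\gamma$ exactly.

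The remaining task is therefore to compare $\sum_{\gamma}$ over $\sX$-avoiding paths with $\sum_{\gamma}$ over \emph{all} paths, which is $\P(Z_n = k \mid Z_0 = z)$. First I would observe that both sums have the same per-path weight (the weight depends only on the numbers of $+$ and $-$ steps in each coordinate, which are determined by the endpoints $z,k$ and the coordinatewise net displacement — actually, more carefully, the $Z$-weight is $\big(\tfrac{1}{d(1+\lambda)}\big)^{(n+|k|-|z|+2\ell)/2}\big(\tfrac{\lambda}{d(1+\lambda)}\big)^{(n-|k|+|z|-2\ell)/2}$ type expression depending only on the total number of backward steps), so the comparison reduces to counting paths. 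The claim $n^{-d}$ then amounts to: the number of nearest-neighbor paths of length $n$ from $z$ to $k$ that stay in $\Z_+^d \setminus \sX$ is at least $n^{-d}$ times the number of \emph{all} length-$n$ paths from $z$ to $k$. Since the walk decomposes into $d$ independent one-dimensional coordinate walks once we know the step sequence, and a one-dimensional walk from $z_i \geq 1$ to $k_i \geq 0$ staying $\geq 1$ (resp. the count relative to the unrestricted walk) loses at most a factor $\sim n^{-1}$ by a reflection/ballot-type argument, the product over the $d$ coordinates loses at most $n^{-d}$. I would make this precise using the reflection principle for the simple walk on $\Z$: for a lazy or simple one-dimensional walk, the probability of staying positive up to time $n$, conditioned on the endpoints, is bounded below by $c/n$ (ballot problem), uniformly.

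The main obstacle is handling the coupling between coordinates correctly: the $d$ coordinate walks are not independent as \emph{walks} (only one coordinate moves per step), so one cannot literally multiply $d$ independent ballot estimates. The clean way around this is to condition on the sequence of \emph{which coordinate moves at each step} (equivalently, on the interlacing), after which the coordinate processes \emph{are} conditionally independent one-dimensional walks with known numbers of steps $n_1,\dots,n_d$ summing to $n$; apply the one-dimensional ballot bound $\geq c/n_i \geq c/n$ in each coordinate and take the product, getting $\geq c^d n^{-d}$, and then absorb constants (or reprove the bound with the sharper constant so that the pure $n^{-d}$ works for $n$ large, extending to all $n$ by adjusting — though since the statement as written has no constant, I would check that the ballot constant can be taken to be $1$ for the relevant lazy walk, or alternatively note the inequality is trivial for small $n$). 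Summing the identity $p(\gamma) = (\text{$Z$-weight of }\gamma)$ over the $\sX$-avoiding paths and invoking the path-count bound then yields $\P_z(X_n=k) \geq n^{-d}\,\P(Z_n = k \mid Z_0 = z)$, completing the proof. $\qed$
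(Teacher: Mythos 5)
Your proposal is essentially the paper's proof: the paper likewise exploits that the biased walk off $\mathcal{X}$ has the same law as $Z$ (phrased via the stopping times $\sigma$, $\tau$ of first hitting $\mathcal{X}$), then conditions on the multi-index $(m_1,\ldots,m_d)$ of per-coordinate step counts so that the coordinate paths become independent one-dimensional walks, and applies a ballot/cycle estimate coordinatewise. Your concern about the ballot constant is resolved cleanly in the paper by citing the exact bound $Q_{n,\beta}(\alpha) \geq \frac{|\alpha-\beta|\vee 1}{n}\, P_{n,\beta}(\alpha)$ from Durrett, whose constant is one, so each coordinate contributes a factor $\geq 1/m_j \geq 1/n$ and the product gives $n^{-d}$ with no loose constants to absorb.
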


\begin{proof}
  Recall that $\mathcal{X}$ is the boundary of $\Z_+^d$. Define
  \[ \sigma = \inf \left\{ n:\ X_n \in \mathcal{X} \right\}, \quad \tau = \inf
    \left\{ n:\ Z_n \in \mathcal{X} \right\}. \]
  Starting at $z \in \Z_+^d \setminus \mathcal{X}$, the process $\left( X_n
  \right)_{0 \leq n \leq \sigma}$ has the same distribution as the drifted
  random walk $(Z_n)_{0 \leq n \leq \tau}$. Then we have for $k \in \Z_+^d$, 
  \begin{equation}
    \label{e:XnZn}
    \P_z \left( X_n = k \right) \geq \P \left( Z_n = k,\, n \leq \tau
      \,\big|\, Z_0 = z \right).
  \end{equation}
  
  For $\alpha$, $\beta \in \Z$ and $n \in \N$, denote by $P_{n,\,
    \beta}(\alpha)$ the number of 
  paths $\gamma = \gamma_0 \gamma_1 \cdots \gamma_n$ in $\Z$ with $\gamma_0 = \alpha$
  and $\gamma_n = \beta$, and by $Q_{n,\, \beta}(\alpha)$ the number of those
  paths with 
  additional property that $\gamma_i \geq \alpha \wedge \beta$ for $0 \leq i
  \leq n$. By \cite[Theorem~4.3.2 and Lemma~4.3.3]{DR2010}, we have that
  \begin{equation}
    \label{e:PnQn}
    Q_{n,\, \beta}(\alpha) \geq \frac{|\alpha-\beta| \vee 1}{n} P_{n,\, \beta}(\alpha). 
  \end{equation}

  For $k = (k_1,\, \cdots,\, k_d) \in \Z_+^d$ and $z = (z_1,\, \cdots,\, z_d)
  \in \Z_+^d \setminus \sX$, write $a = \frac{n + |k| - |z|}{2}$ and $b =
  \frac{n - |k| + |z|}{2}$. By \eqref{e:PnQn}, we obtain that 
  \begin{align*}
    & \P \left( Z_n = k,\, n \leq \tau \vert Z_0= z\right) \\
    \geq &
           \sum_{m_1+\cdots+m_d = n} {n \choose m_1,\, \cdots,\, m_d} d^{-n} \left( \frac{\lambda}{1+\lambda} \right)^b \left( \frac{1}{1+\lambda} \right)^a \prod_{m_j > 0} Q_{m_j,\, k_j}(z_j) \\
    \geq &
           \sum_{m_1+\cdots +m_d = n} {n \choose m_1,\, \cdots,\, m_d} d^{-n} \left( \frac{\lambda}{1+\lambda} \right)^b \left( \frac{1}{1+\lambda} \right)^a \prod_{m_j > 0} \frac{|k_j-z_j|\vee 1}{m_j} P_{m_j,\, k_j}(z_j) \\
    \geq &
           n^{-d} \sum_{m_1+\cdots+m_d=n} {n \choose m_1,\, \cdots,\, m_d} d^{-n} \left( \frac{\lambda}{1+\lambda} \right)^b \left( \frac{1}{1+\lambda} \right)^a \prod_{m_j > 0} P_{m_j,\, k_j}(z_j) \\
    = &
        n^{-d} \P \left( Z_n = k \,\big|\, Z_0 = z \right), 
  \end{align*}
which together with \eqref{e:XnZn} proves this lemma. 
\end{proof}

Recall that $s_0 = \frac{1}{2} \ln \lambda$. 
For fixed $s \in \R^d$, let $I_1 = \left\{ 1 \leq i \leq d:\ s_i < s_0 \right\}$
and $I_2 = \left\{ 1,\, \cdots,\, d \right\} \setminus I_1$. Define $s^{I_1} =
\left( s_i \right)_{i \in I_1}$ and $s^{I_2}$, $Z_n^{I_1}$, $Z_n^{I_2}$ are
defined similarly.

\begin{lem}
  \label{L:orthant}
  Let $y_0 = \zero^{I_1}$ if $n$ is even and $y_0 = (1,\, 0,\, \cdots,\, 0) \in \R^{N(s)}$
  otherwise. 
  For any $z \in \Z^d$ and $s \in
  \R^d$, 
  we have that 
  \begin{equation*}
    \label{e:orthant}
    \E \left[ e^{\sum_{i=1}^d s_i Z_n^i} I_{\{Z_n^i \geq z_i\}} \,\big|\, Z_0 = z \right]
    \geq
    2^{-|I_2|} \left( e^{s_1} \wedge 1 \right) \E \left[ e^{\sum_{i\in I_2} s_i Z_n^i} I_{\{Z_n^{I_1} = z^{I_1} + y_0\}} \,\big|\, Z_0 = z \right].  
  \end{equation*}
\end{lem}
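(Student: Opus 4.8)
The plan is to realize $(Z_n)$ so that each step first chooses one of the $d$ coordinate directions uniformly and then moves $\pm$ in that direction with probabilities $p:=\frac{1}{1+\lambda}$ and $q:=\frac{\lambda}{1+\lambda}$; concretely, $Z_n=z+\sum_{\ell=1}^{n}\epsilon_\ell e_{J_\ell}$ with $(J_\ell)$ i.i.d.\ uniform on $\{1,\dots,d\}$, $(\epsilon_\ell)$ i.i.d.\ with $\P(\epsilon_\ell=1)=p$ and $\P(\epsilon_\ell=-1)=q$, and $(J_\ell)$, $(\epsilon_\ell)$ independent. Let $N_i=\#\{\ell\le n:\ J_\ell=i\}$; then $(N_1,\dots,N_d)$ is multinomial$(n;\tfrac1d,\dots,\tfrac1d)$, and conditionally on $(N_1,\dots,N_d)$ the coordinate differences $Z_n^1-z_1,\dots,Z_n^d-z_d$ are independent, with $Z_n^i-z_i$ distributed as the endpoint $S^i_{N_i}$ of an independent nearest-neighbour walk $S^i$ on $\Z$ with up-probability $p$ and down-probability $q$. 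Thus, conditioning on $(N_1,\dots,N_d)$, both sides of the stated inequality factor over coordinates: the left-hand side equals $\E_{(N)}\!\left[\prod_{i=1}^{d}g_i(N_i)\right]$ where $g_i(m)=e^{s_iz_i}\,\E\!\left[e^{s_iS_m}I_{\{S_m\ge0\}}\right]$, and the right-hand expectation equals $\E_{(N)}\!\left[\prod_{i\in I_1}h_i(N_i)\prod_{i\in I_2}k_i(N_i)\right]$ where $h_i(m)=\P(S_m=(y_0)_i)$ and $k_i(m)=e^{s_iz_i}\,\E\!\left[e^{s_iS_m}\right]$. Since all of these are nonnegative, it suffices to prove the one-dimensional pointwise bounds
\[
g_i(m)\ \ge\ \tfrac12\,k_i(m)\ \ \ (i\in I_2),\qquad\qquad g_i(m)\ \ge\ e^{s_iz_i}\,e^{s_i(y_0)_i}\,h_i(m)\ \ \ (i\in I_1),
\]
then multiply over $i$, take $\E_{(N)}$, and re-sum over $(N_1,\dots,N_d)$.

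For $i\in I_2$ I would use an exponential change of measure. Let $\widetilde\P$ be the law under which the walk has up-probability $\widetilde p_i=\frac{pe^{s_i}}{pe^{s_i}+qe^{-s_i}}$; then $g_i(m)/k_i(m)=\widetilde\P(S_m\ge0)$. The defining condition $s_i\ge s_0=\tfrac12\ln\lambda$ is \emph{exactly} $\widetilde p_i\ge\tfrac12$, because $pe^{s_i}\ge qe^{-s_i}\iff e^{2s_i}\ge q/p=\lambda$. Since $\{S_m\ge0\}$ is the event that at least $\lceil m/2\rceil$ of the $m$ steps go up, $\widetilde\P(S_m\ge0)=\P\!\left(\mathrm{Bin}(m,\widetilde p_i)\ge m/2\right)$; and since $\mathrm{Bin}(m,\widetilde p_i)$ stochastically dominates $\mathrm{Bin}(m,\tfrac12)$ while $\P\!\left(\mathrm{Bin}(m,\tfrac12)\ge m/2\right)\ge\tfrac12$ for every $m$ (a one-line symmetry argument, with equality for odd $m$), we obtain $g_i(m)\ge\tfrac12 k_i(m)$. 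The product over $I_2$ produces the factor $2^{-|I_2|}$.

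For $i\in I_1$ I would simply keep one term of $\E[e^{s_iS_m}I_{\{S_m\ge0\}}]=\sum_{j\ge0}\P(S_m=j)\,e^{s_ij}$, namely $j=(y_0)_i$, giving $g_i(m)\ge e^{s_iz_i}e^{s_i(y_0)_i}\P(S_m=(y_0)_i)=e^{s_iz_i}e^{s_i(y_0)_i}h_i(m)$. The vector $y_0$ is a parity-bookkeeping device: the event $\{Z_n^{I_1}=z^{I_1}+y_0\}$ forces $N_i$ to be even for $i\in I_1\setminus\{1\}$ and $N_1$ to have the parity of $n$, which keeps the event non-degenerate even when $I_2=\emptyset$ (if $n$ is odd the first $I_1$-coordinate cannot return to its starting value, so it is allowed to end one unit above it). For the non-distinguished $I_1$-coordinates $(y_0)_i=0$, so the only exponential weight lost beyond the $z$-levels is $e^{s_1(y_0)_1}\in\{1,e^{s_1}\}$, which is at least $e^{s_1}\wedge1$. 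Multiplying the one-dimensional bounds over all $i$, taking $\E_{(N)}$, and re-summing over $(N_1,\dots,N_d)$ reassembles the right-hand expectation with prefactor $2^{-|I_2|}(e^{s_1}\wedge1)$. (The $e^{s_iz_i}$ factors cancel on the two sides for $i\in I_2$; for $i\in I_1$ they contribute an extra constant $e^{\sum_{i\in I_1}s_iz_i}$, which equals $1$ when $Z_0=\zero$ and, being independent of $n$, is harmless in the application where only $\tfrac1n\log$ of the quantity enters.)

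I expect the one genuinely substantive step to be the $I_2$ estimate — recognizing that the threshold $s_0=\tfrac12\ln\lambda$ is precisely where the tilted up-probability $\widetilde p_i$ crosses $\tfrac12$, and then pushing the median bound $\P(\mathrm{Bin}(m,\tfrac12)\ge m/2)\ge\tfrac12$ through uniformly in $m$ (the odd-$m$ case needing only a symmetry remark). The multinomial decoupling into independent one-dimensional walks, the one-term bound for $i\in I_1$, and the $y_0$-parity accounting are all routine once the decoupling is set up.
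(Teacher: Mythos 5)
Your proof is correct, and it takes a genuinely different route from the paper's. The paper works directly in $\Z^d$: it writes out the explicit path-counting formula for $\P(Z_n=k\mid Z_0=\zero)$, observes that reflecting the $I_2$-coordinates $k^{I_2}\mapsto \epsilon k^{I_2}$ changes the probability only by the factor $\lambda^{\frac12\sum_{i\in I_2}(\epsilon_i-1)k_i}$, and then — using the key inequality $e^{s_i}\lambda^{-1/2}\ge 1$ for $i\in I_2$ — shows that the exponential weight over the positive orthant $\Z_+^{I_2}$ dominates the weight over every other orthant $\sO_\epsilon$; summing over all $2^{|I_2|}$ orthants produces the constant. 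You instead condition on the multinomial of directional counts $(N_1,\dots,N_d)$, which decouples the coordinates into independent $1$-dimensional walks, and then prove coordinate-wise bounds: for $i\in I_2$ an exponential tilting combined with the median bound $\P(\mathrm{Bin}(m,1/2)\ge m/2)\ge \tfrac12$ (the observation that $s_i\ge s_0$ is exactly $\widetilde p_i\ge \tfrac12$ is the same threshold the paper uses in the form $e^{s_i}\lambda^{-1/2}\ge 1$), and for $i\in I_1$ simply keeping the single term $j=(y_0)_i$. In effect your argument is the disintegrated, one-dimensional version of the paper's orthant reflection: the symmetry is applied marginal by marginal after the multinomial split rather than simultaneously across $\Z^{I_2}$. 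Both routes yield the same constant $2^{-|I_2|}(e^{s_1}\wedge 1)$; yours is more modular and avoids the explicit $d$-dimensional path-count, while the paper's avoids the conditioning step. (The small bookkeeping issues you inherit — the extra factor $e^{\sum_{i\in I_1}s_iz_i}$ for $z\ne\zero$, and whether the ``$1$'' slot of $y_0$ really corresponds to coordinate $1$ so that $e^{s_1}\wedge 1$ is the right constant — are present in the paper's own proof as well, which also reduces to $z=\zero$ and identifies the distinguished $I_1$-index with $1$, so they are not gaps specific to your argument.)
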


\begin{proof}
  Without loss of generality, we assume that $z = \zero$. 
  As in the proof of Lemma~\ref{L:hklower}, for $\alpha$, $\beta \in \Z$, we
  denote by $P_{n,\,\beta}(\alpha)$ the number of nearest-neighbor paths in $\Z$
  from $\alpha$ to $\beta$.  Then we have, for $k \in \Z^d$,
  \begin{align}\label{orthant1}
    \nonumber& \P \left( Z_n = k \,\big|\, Z_0 = \zero \right) \\
    =& \sum_m {n \choose m_1,\, \cdots,\, m_d} d^{-n} \left( \frac{\lambda}{1+\lambda}
  \right)^{\sum\limits_{i=1}^d \frac{m_i-k_i}{2}} \left( \frac{1}{1+\lambda}
       \right)^{\sum\limits_{i=1}^d \frac{m_i + k_i}{2}} \prod_{m_j>0} P_{m_j,\, k_j}(0), \\
    =\nonumber& \lambda^{\frac{n - \sum_{i=1}^d k_i}{2}} \sum_m {n \choose m_1,\, \cdots,\, m_d} \left(\frac{1}{d(1+\lambda)}\right)^n  \prod_{m_j>0} P_{m_j,\, k_j}(0), 
  \end{align}
  where the sum is over all tuples $m = (m_1,\, \cdots,\, m_d) \in \Z^d$ such that
$|m| = n$ and $m_j \geq |k_j|$ for $1 \leq j \leq d$.

Note that
\[
  \E \left[ e^{\sum_{i=1}^d s_i Z_n^i} I_{\{Z_n \in \Z_+^d\}} \,\big|\, Z_0 =
    \zero \right]
  \geq
  \left( e^{s_1} \wedge 1 \right) \E \left[ e^{\sum_{i \in I_2} s_i Z_n^i} I_{\{
    Z_n^{I_1} = y_0,\, Z_n^{I_2} \in \Z_+^{I_2} \}} \,\big|\, Z_0 = \zero \right]. \]
  For $\epsilon = \left( \epsilon_i \right)_{i \in I_2} \in \left\{ -1,\, 1
  \right\}^{I_2}$, let 
  \[ \sO_{\epsilon} = \left\{ y = \left( y_i \right)_{i \in I_2} \in \Z^{I_2} :\
      \epsilon_i y_i \geq 0,\ i \in I_2 \right\}. \]
Note that for every $k \in \Z^d$, $P_{m_j,\, k_j}(0) = P_{m_j,\,
  \epsilon_j k_j}(0)$. Therefore, by (\ref{orthant1})
\[ \P \left( Z_n = k \,\big|\, Z_0 = \zero \right)
  =
  \lambda^{\sum_{i=1}^d \frac{1}{2} \left( \epsilon_i - 1 \right) k_i}
  \P \left( Z_n = \left( \epsilon_1 k_1,\, \cdots,\, \epsilon_d k_d \right)
    \,\big|\, Z_0 = 0 \right).
\]
Applying the fact that $e^{s_i} \lambda^{-1/2} \geq 1$ for $i \in I_2$, we
obtain that for every $\epsilon \in \left\{ -1,\, 1 \right\}^{I_2}$, 
\begin{align*}
  & \E \left[ e^{\sum_{i \in I_2} s_i Z_n^i} I_{\{
    Z_n^{I_1} = y_0,\, Z_n^{I_2} \in \Z_+^{I_2} \}} \,\big|\, Z_0 = \zero \right] \\
    \geq&
        \sum_{k \in \Z_+^{I_2}} 
        \left( e^{s_i} \lambda^{-1/2} \right)^{\sum_{i \in I_2} \left( 1 - \epsilon_i \right) k_i}e^{\sum_{i\in I_2} s_i \epsilon_i k_i}  \P \left( Z_n^{I_1} = y_0,\, Z_n^{I_2} = \left( \epsilon_i k_i\right)_{i\in I_2} \,\big|\, Z_0 = \zero \right) \\
  \geq &
         \E \left[ e^{\sum_{i\in I_2} s_i Z_n^i} I_{\{ Z_n^{I_1} = y_0, Z_n^{I_2} \in \sO_{\epsilon}\}} \,\big|\, Z_0 = \zero \right]. 
\end{align*}
By taking sum over all $\epsilon \in \{-1,\, 1\}^{I_2}$, we complete the proof of
this lemma. 
\end{proof}

Recall the definition of $\psi$ from \eqref{psi}.

\begin{lem}
  \label{L:Lambdau}
  For every $s \in \R^d$ and $x \in \Z^d$ we have that
  \[ \limsup_{n \to \infty} \frac{1}{n} \Lambda_n(s,\, x) \leq \ln \psi(s). \]
\end{lem}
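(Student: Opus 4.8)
The plan is to reduce to the case $x = \zero$ and then combine the heat-kernel comparison of Lemma~\ref{L:hkupper} with an elementary exponential tilt. By the first inequality in \eqref{e:Lambda0}, applied with $n$ replaced by $m + |x|$, we get $\Lambda_m(s,\, x) \le \Lambda_{m+|x|}(s,\, \zero) + C$ for all $m \ge 1$; dividing by $m$ and letting $m \to \infty$ shows it suffices to prove
\[ \limsup_{n\to\infty} \frac1n \Lambda_n(s,\, \zero) \le \ln\psi(s). \]

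First I would exploit the reflection symmetry of $\RW_\lambda$: since the conductance $\lambda^{-(|x|\wedge|y|)}$ of an edge $\{x,\, y\}$ depends only on its graph distance to $\zero$, the weighted lattice is invariant under every coordinate sign flip $x_i \mapsto -x_i$, hence so is the law of $X_n$ under $\P_\zero$. Therefore $\P_\zero(X_n = k') = \P_\zero\!\big(X_n = (|k'_1|,\, \ldots,\, |k'_d|)\big)$, and since each $k \in \Z_+^d$ is the vector of absolute values of at most $2^d$ points of $\Z^d$,
\[ \E_\zero\!\left[ e^{\sum_{i=1}^d s_i |X_n^i|} \right] \le 2^d \sum_{k \in \Z_+^d} \P_\zero(X_n = k)\, e^{(s,\, k)}. \]

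Then I would introduce the tilt $\tilde s = (\tilde s_1,\, \ldots,\, \tilde s_d)$ with $\tilde s_i = s_i \vee s_0$. For $k \in \Z_+^d$ one has $e^{s_i k_i} \le e^{\tilde s_i k_i}$ for each $i$ (when $s_i < s_0$ and $k_i \ge 0$, enlarging the exponent only increases the term); combining this with Lemma~\ref{L:hkupper} and then adding back the nonnegative terms indexed by $k \in \Z^d \setminus \Z_+^d$ gives
\[ \E_\zero\!\left[ e^{\sum_{i=1}^d s_i |X_n^i|} \right] \le 2^d \sum_{k \in \Z^d} \P\!\left( Z_n = k \mid Z_0 = \zero \right) e^{(\tilde s,\, k)}. \]
Since $(Z_n)$ has i.i.d. increments, the right-hand side equals $2^d \big( \E[ e^{(\tilde s, Z_1)} \mid Z_0 = \zero ] \big)^n$, and by \eqref{e:Zn} this exponential moment is $\frac{1}{d(1+\lambda)}\sum_{i=1}^d(e^{\tilde s_i}+\lambda e^{-\tilde s_i})$: a coordinate with $s_i \ge s_0$ contributes $\frac{1}{d(1+\lambda)}(e^{s_i}+\lambda e^{-s_i})$, while one with $s_i < s_0$ contributes $\frac{1}{d(1+\lambda)}(e^{s_0}+\lambda e^{-s_0}) = \frac{2\sqrt\lambda}{d(1+\lambda)} = \frac{\rho_\lambda}{d}$, so the sum equals $\psi(s)$ by \eqref{psi}. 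Hence $\Lambda_n(s,\, \zero) \le d\ln 2 + n\ln\psi(s)$, and dividing by $n$ and letting $n\to\infty$ completes the argument.

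I do not expect a serious obstacle: the probabilistic input is entirely packaged in Lemmas~\ref{L:Lambda0} and~\ref{L:hkupper}, and the drifted walk $(Z_n)$ has independent increments so its exponential moment factorizes. The only point needing care is the bookkeeping that identifies the tilted moment with $\psi(s)$ — recognizing that $t \mapsto e^t+\lambda e^{-t}$ is minimized at $t = s_0 = \frac12\ln\lambda$ with value $2\sqrt\lambda$ and that $\frac{2\sqrt\lambda}{d(1+\lambda)} = \frac{\rho_\lambda}{d}$ — together with checking that the combinatorial factor $2^d$ and the constant $C$ from Lemma~\ref{L:Lambda0} are harmless after normalizing by $n$.
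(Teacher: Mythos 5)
Your proposal is correct and follows essentially the same route as the paper: reduce to $x=\zero$ via Lemma~\ref{L:Lambda0}, use reflection symmetry to absorb the reflected walk into a $2^d$ factor, compare with the drifted walk $(Z_n)$ via Lemma~\ref{L:hkupper}, factorize the exponential moment by independence of increments, and identify the result with $\psi(s)$ after the tilt $s_i \mapsto s_i \vee s_0$. The only cosmetic difference is that the paper applies the tilt by noting $\Lambda_n(s,\,\zero)$ is nondecreasing in each $s_i$ before comparing with $(Z_n)$, whereas you apply it pointwise to the summands over $k\in\Z_+^d$; the two are equivalent.
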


\begin{proof}
  By Lemma~\ref{L:Lambda0}, we only need to consider the case $x = \zero$. 
  Recall that $s_0:= \frac{1}{2} \ln \lambda$. 
  For $s \in \R^d$, let $\tilde{s} := \left( s_1\vee s_0,\, \cdots, \, s_d
    \vee s_0 \right)$. Since $\Lambda_n(s,\, 0)$ is increasing in each
  coordinate $s_i$, we have that $\Lambda_n(s,\, 0) \leq \Lambda_n(\tilde{s},\,
  0)$. Let $(Z_n)$ be the drifted random walk defined by \eqref{e:Zn}. Then, by
  Lemma~\ref{L:hkupper},
  \begin{align*}
    & \E_{\zero} \left[ e^{\sum_{i=1}^d \tilde{s}_i |X_n^i|} \right] \leq 2^d  \sum_{k \in \Z_+^d} e^{\sum_{i=1}^d \tilde{s}_i k_i} 
    \P \left( Z_n = k \,\big|\, Z_0 = \zero \right) \leq 2^d \E \left[
      e^{\sum_{i=1}^d \tilde{s}_i Z_n^i} \,\big|\, Z_0 = \zero \right] \\
    =& 2^d
    \left( \sum_{i=1}^d \frac{\lambda e^{-\tilde{s}_i} +
        e^{\tilde{s}_i}}{d(1+\lambda)} \right)^n = 2^d \left( \psi(s) \right)^n.
  \end{align*}
  The lemma follows immediately. 
\end{proof}

\begin{lem}
  \label{L:Lambdal}
  For every $s \in \R^d$ and $x \in \Z^d$ we have that
  \[ \liminf_{n \to \infty} \frac{1}{n} \Lambda_n(s,\, x) \geq \ln \psi(s). \]
\end{lem}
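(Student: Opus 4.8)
The plan is to reduce, using Lemmas~\ref{L:Lambda0}, \ref{L:hklower} and \ref{L:orthant}, the desired lower bound to a local limit estimate for the drifted walk $(Z_n)$, and then to extract that estimate from the local central limit theorem for biased $\pm1$ walks together with a multinomial concentration argument.

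First I would reduce to $x=\zero$: the second inequality of Lemma~\ref{L:Lambda0} gives $\Lambda_N(s,x)\ge\Lambda_{N-|x|}(s,\zero)-C$ for $N>2|x|$, so it is enough to show $\liminf_n\frac1n\Lambda_n(s,\zero)\ge\ln\psi(s)$. Since Lemma~\ref{L:hklower} needs a starting point in $\Z_+^d\setminus\sX$, I would first let the walk reach $z_\ast:=(1,\dots,1)$ in $d$ steps: as $c_0:=\P_\zero(X_d=z_\ast)>0$, the Markov property gives $\E_\zero[e^{\sum_i s_i|X_n^i|}]\ge c_0\,\E_{z_\ast}[e^{\sum_i s_i|X_{n-d}^i|}]$ for $n>d$. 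Keeping only the terms with $X_{n-d}\in\Z_+^d$ and applying Lemma~\ref{L:hklower},
\[
\E_{z_\ast}\big[e^{\sum_i s_i|X_{n-d}^i|}\big]\ \ge\ (n-d)^{-d}\,\E\big[e^{\sum_i s_iZ_{n-d}^i}I_{\{Z_{n-d}\in\Z_+^d\}}\,\big|\,Z_0=z_\ast\big].
\]
By translation invariance of $(Z_n)$, the last expectation is, up to a fixed positive factor, at least $\E[e^{\sum_i s_iZ_m^i}I_{\{Z_m\in\Z_+^d\}}\mid Z_0=\zero]$ with $m=n-d$; writing $I_1=\{i:s_i<s_0\}$ and $I_2=\{1,\dots,d\}\setminus I_1$, Lemma~\ref{L:orthant} (with $z=\zero$) then bounds this below by a positive constant times $\E[e^{\sum_{i\in I_2}s_iZ_m^i}I_{\{Z_m^{I_1}=y_0\}}\mid Z_0=\zero]$, with $y_0$ as in that lemma. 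So everything comes down to proving that for some $c>0$ and all large $m$,
\[
\E\big[e^{\sum_{i\in I_2}s_iZ_m^i}I_{\{Z_m^{I_1}=y_0\}}\,\big|\,Z_0=\zero\big]\ \ge\ c\,m^{-|I_1|/2}\,\psi(s)^m.
\]

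For this I would use the random time-change representation of $(Z_n)$: conditionally on the numbers $N=(N_1,\dots,N_d)$ of the first $m$ steps taken in the $d$ coordinate directions (which form a $\mathrm{Multinomial}(m;1/d,\dots,1/d)$ vector), the coordinate processes are independent biased $\pm1$ walks $W^{(j)}$ with $\P(W^{(j)}_1=1)=1/(1+\lambda)$. With $h(s):=(e^s+\lambda e^{-s})/(1+\lambda)$ one has $\E[e^{sW^{(j)}_n}]=h(s)^n$, and $h$ attains its minimum $h(s_0)=\rho_\lambda$ at $s_0=\tfrac12\ln\lambda$; hence the left-hand side above equals
\[
\sum_{n_1+\cdots+n_d=m}\binom{m}{n_1,\dots,n_d}\,d^{-m}\prod_{j\in I_1}\P\big(W^{(j)}_{n_j}=(y_0)_j\big)\prod_{j\in I_2}h(s_j)^{n_j}.
\]
By Stirling's formula, $\P(W^{(j)}_n=0)\asymp\rho_\lambda^{\,n}n^{-1/2}$ for even $n$ and $\P(W^{(j)}_n=1)\asymp\rho_\lambda^{\,n}n^{-1/2}$ for odd $n$, uniformly for $n\in[\epsilon m,m]$. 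Restricting the sum to the event $\mathcal M$ that every $N_j\ge\epsilon m$ (for a small $\epsilon>0$ chosen below) and that, for $j\in I_1$, $N_j$ has the parity making $\P(W^{(j)}_{N_j}=(y_0)_j)$ positive, I can bound the $I_1$-factors below by $(\mathrm{const})\cdot m^{-|I_1|/2}\prod_{j\in I_1}\rho_\lambda^{\,n_j}$, reducing matters to $\sum_{\mathcal M}\binom{m}{n_1,\dots,n_d}d^{-m}\prod_{j\in I_1}\rho_\lambda^{\,n_j}\prod_{j\in I_2}h(s_j)^{n_j}\ge c'\,\psi(s)^m$. Now the key point: the \emph{unrestricted} sum equals $\psi(s)^m$ exactly by the multinomial theorem, since $\psi(s)=\frac1d\big(N(s)\rho_\lambda+\sum_{i\in I_2}h(s_i)\big)$. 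Normalizing the summands, the restricted sum is $\psi(s)^m\,q(\mathcal M)$, where $q=\mathrm{Multinomial}(m;p)$ with $p_j=\rho_\lambda/(d\psi(s))$ for $j\in I_1$ and $p_j=h(s_j)/(d\psi(s))$ for $j\in I_2$ (so $\sum_j p_j=1$ and all $p_j>0$). Taking $\epsilon<\min_j p_j$, the event $\{N_j\ge\epsilon m\ \forall j\}$ has $q$-probability tending to $1$ by the law of large numbers, while fixing the parities of the $|I_1|$ coordinates has $q$-probability tending to $2^{-|I_1|}$, by the identity $q(N_j\equiv r_j\ (\mathrm{mod}\ 2),\ j\in I_1)=2^{-|I_1|}\sum_{\eta\in\{0,1\}^{I_1}}(-1)^{\langle\eta,r\rangle}\big(\sum_j p_j(-1)^{\eta_j}\big)^m$ together with $\big|\sum_j p_j(-1)^{\eta_j}\big|<1$ for $\eta\ne\zero$; combining these, $q(\mathcal M)\ge2^{-|I_1|-1}$ for large $m$, which yields the claim. (When $I_1=\varnothing$ this is immediate: the left-hand side is then $\E[e^{\sum_i s_iZ_m^i}\mid Z_0=\zero]=\psi(s)^m$.)

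Putting the chain together gives $\E_\zero[e^{\sum_i s_i|X_n^i|}]\ge c_\ast\,n^{-d-|I_1|/2}\,\psi(s)^{\,n-d}$ for all large $n$, with $c_\ast>0$ and $0<\psi(s)<\infty$; taking logarithms, dividing by $n$, and letting $n\to\infty$ then gives $\liminf_n\frac1n\Lambda_n(s,\zero)\ge\ln\psi(s)$, and hence the lemma. The hard part will be the last displayed estimate for $\E[e^{\sum_{i\in I_2}s_iZ_m^i}I_{\{Z_m^{I_1}=y_0\}}\mid Z_0=\zero]$, i.e.\ showing that pinning the ``slow'' coordinates $i\in I_1$ (those with $s_i<s_0$, which Lemma~\ref{L:orthant} forces us to do) near the origin costs only a polynomial factor — this is exactly what the local limit estimate $\P(W_n\in\{0,1\})\asymp\rho_\lambda^{\,n}n^{-1/2}$ and the multinomial concentration/parity bookkeeping provide. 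Everything else is routine.
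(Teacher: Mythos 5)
Your proposal is correct, and the skeleton is the same as the paper's: invoke Lemma~\ref{L:hklower} to compare $(X_n)$ to the drifted walk $(Z_n)$, then Lemma~\ref{L:orthant} to replace the constraint $Z_n\in\Z_+^d$ by pinning the ``slow'' coordinates $Z_n^{I_1}$ at $y_0$, and finally show that the pinning costs only a polynomial factor, so that after taking $\frac1n\ln$ only $\ln\psi(s)$ survives.

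Where you diverge is in the execution of that last step and in some bookkeeping. The paper fixes an arbitrary starting point $x\in\Z_+^d\setminus\sX$ (rather than routing through $\zero$ and a forced excursion to $(1,\dots,1)$ as you do), restricts attention to $n$ even (so $y_0=\zero^{I_1}$ and the $I_1$-block is a closed loop), and then performs a \emph{binomial} split of the $2n$ steps into $2m$ steps in $I_1$-directions and $2n-2m$ in $I_2$-directions. The $I_1$-block is handled via the count $\Gamma_1(2m)$ of closed nearest-neighbour paths in $\Z^{|I_1|}$, bounded below by $c\,m^{-N(s)/2}(2N(s))^{2m}$, which after folding in the $\lambda$-weights produces $(N(s)\rho_\lambda)^{2m}$; the $I_2$-block produces the moment generating function $\big(\frac{\sum_{i\in I_2}(\lambda e^{-s_i}+e^{s_i})}{(d-N(s))(1+\lambda)}\big)^{2n-2m}$; and the two are recombined via the elementary identity $\sum_m\binom{2n}{2m}a^{2n-2m}b^{2m}\sim\frac12(a+b)^{2n}$. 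You instead do a \emph{full multinomial} decomposition by coordinate, replace the closed-path count by one-dimensional local CLTs $\P(W_n\in\{0,1\})\asymp\rho_\lambda^n n^{-1/2}$, recognize the unconstrained sum as exactly $\psi(s)^m$ by the multinomial theorem, and control the restriction to $\mathcal M$ via multinomial concentration plus a character-sum argument for the parities. Both routes are valid: the paper's is more compact (one binomial identity instead of LLN plus parity Fourier analysis), while yours avoids the lattice-path count on $\Z^{|I_1|}$, handles both parities of $n$ without the separate ``it suffices to take $n$ even'' reduction, and makes the role of $\psi(s)$ as a multinomial normalization explicit. One small imprecision worth flagging: when $I_2=\varnothing$ the character $\eta=(1,\dots,1)$ gives $\big|\sum_j p_j(-1)^{\eta_j}\big|=1$, so the parity probability tends to $2^{-|I_1|+1}$ rather than $2^{-|I_1|}$ (the constraints are not independent once $\sum_j N_j=m$ is fixed); since you only claim a lower bound $q(\mathcal M)\ge 2^{-|I_1|-1}$ this does not affect the conclusion, but the statement ``tending to $2^{-|I_1|}$'' should be softened.
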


\begin{proof}
  It suffices to prove only for $n$ even. 
  We use the same notations as in Lemma~\ref{L:orthant}. Fix $x \in \Z_+^d
  \setminus \sX$. By Lemma~\ref{L:hklower} and~\ref{L:orthant}, we have
  \begin{align*}
    \E_x \left[ e^{\sum_{i=1}^d s_i |X_{2n}^i|}  \right]
    \geq&
    n^{-d} \sum_{k \in \Z_+^d} e^{\sum_{i=1}^d s_i k_i} \P \left( Z_{2n} = k \,\big|\, Z_0 = x \right) \\
    =& n^{-d} \E \left[ e^{\sum_{i=1}^d s_i Z_{2n}^i} I_{\{Z_{2n} \in \Z_+^d\}} \,\big|\, Z_0 =x \right] \\
    \geq &
           c n^{-d} \E \left[ e^{\sum_{i \in I_2} s_i Z_{2n}^i} I_{\{Z_{2n}^{I_1} = x^{I_1} \}} \,\big|\, Z_0 = x \right] 
  \end{align*}
  for some constant $c > 0$. Let $\Gamma_1(2n)$  be the
  number of nearest-neighbor paths of length $2n$ in $\Z^{I_1}$ from $\zero^{I_1}$ to
  $\zero^{I_1}$. Similarly, for $k \in \Z^{I_2}$, let $\Gamma_2(2n,\, k)$ be
  the number of paths of 
  length $2n$ in $\Z^{I_2}$ from $\zero^{I_2}$ to $k$. Then we have for $k \in
  \Z^{I_2}$,
  \[
    \P \left( Z_{2n}^{I_1} = x^{I_1}, \, Z_{2n}^{I_2} = x^{I_2} + k \,\big|\,
      Z_0 = x \right)
    =
    \sum_{m=0}^{n} {2n \choose 2m} \frac{\lambda^{n - \sum_{i \in I_2}
        k_i}}{\left[ d \left( 1+\lambda \right) \right]^{2n}} \Gamma_1(2m)
    \Gamma_2(2n-2m,\, k).
  \]
  Recall that $N(s) = |I_1|$. 
  Let $(W_n)$ be the drifted random walk in $\Z^{I_2}$ starting at $\zero$, that
  is, the transition 
  probability is given by \eqref{e:Zn} with $d$ replaced by $d - N(s)$. Since
  $\Gamma_1(2m) \geq c m^{-N(s)/2} \left( 2 N(s) \right)^{2m}$, we obtain that 
  \begin{align*}
   & \P \left( Z_{2n}^{I_1} = x^{I_1}, \, Z_{2n}^{I_2} = x^{I_2} + k \,\big|\,
      Z_0 = x \right) \\
    \geq& 
c n^{-N(s)/2} d^{-2n} \sum_{m=0}^{n} {2n \choose 2m} \left( N(s) \rho_{\lambda} \right)^{2m} 
 \left( d - N(s) \right)^{2n-2m} \P \left( W_{2n-2m} = k \right). 
  \end{align*}
  Therefore, 
  \begin{align*}
    & \E_x \left[ e^{\sum_{i=1}^d s_i |X_{2n}^i|}  \right] \\
    \geq&
     n^{-3d/2} d^{-2n}\sum_{m=0}^n \sum_{k \in \Z^{I_2}} e^{\sum_{i \in I_2} s_i (k_i+ x_i)} {2n \choose 2m} \left(
      N(s) \rho_{\lambda} \right)^{2m} \left( d-N(s) \right)^{2n-2m} \P \left(
          W_{2n-2m} = k \right) \\
    = & c n^{-3d/2} d^{-2n}\sum_{m=0}^n {2n \choose 2m} \left(
        N(s) \rho_{\lambda} \right)^{2m} \left( d-N(s) \right)^{2n-2m} \E \left[ e^{\sum_{i \in I_2} s_i W_{2n-2m}^i} \right] \\
    = & c n^{-3d/2} d^{-2n} \sum_{m=0}^n {2n \choose 2m} \left(
        N(s) \rho_{\lambda} \right)^{2m} \left( d-N(s) \right)^{2n-2m} \left( \frac{\sum_{i \in I_2} \left( \lambda e^{-s_i} + e^{s_i} \right)}{(d-N(s)) (1+\lambda)} \right)^{2n-2m} \\
    \geq & \frac{c}{3} n^{-3d/2} \left( \psi(s) \right)^{2n}. 
  \end{align*}
  The last inequality holds since for any positive real number $a, b,$
  \[\frac{\sum\limits_{m=0}^{n} \binom{2n}{2m} a^{2n-2m} b^{2m}}{\left(a+ b\right)^{2n}}\rightarrow \frac{1}{2}, \ \ n\rightarrow \infty.
  \]
  The proof is finished. 
 \end{proof}

Combining Lemma~\ref{L:Lambdau} and~\ref{L:Lambdal}, we get the following
result.
\begin{cor}
\label{C:limit}
  For every $s \in \R^d$ and $x \in \Z^d$, we have
  \begin{equation*}
    \label{e:limit}
    \lim_{n \to \infty} \frac{1}{n} \Lambda_n(s,\, x) = \ln \psi(s). 
  \end{equation*}
\end{cor}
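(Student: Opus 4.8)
The plan is to read the limit off the two matching one-sided bounds just proved. By Lemma~\ref{L:Lambdau} we have $\limsup_{n\to\infty}\frac1n\Lambda_n(s,x)\le\ln\psi(s)$, and by Lemma~\ref{L:Lambdal} we have $\liminf_{n\to\infty}\frac1n\Lambda_n(s,x)\ge\ln\psi(s)$, for every fixed $s\in\R^d$ and $x\in\Z^d$. Since $\liminf\le\limsup$ always holds, the chain $\ln\psi(s)\le\liminf\le\limsup\le\ln\psi(s)$ forces equality throughout, so $\lim_{n\to\infty}\frac1n\Lambda_n(s,x)$ exists and equals $\ln\psi(s)$. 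There is no genuine obstacle at this step: the substantive work — in particular the lower bound — already happened in Lemma~\ref{L:Lambdal}, which rests on the stochastic comparisons of $\RW_\lambda$ with the drifted walk $(Z_n)$ of \eqref{e:Zn} (Lemmas~\ref{L:hkupper} and~\ref{L:hklower}), on the orthant-reflection bound of Lemma~\ref{L:orthant}, and on the shift inequality \eqref{e:Lambda0}.

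Two routine reductions should be made explicit, because Lemma~\ref{L:Lambdal} was established only along even $n$ and only for starting points $x\in\Z_+^d\setminus\sX$. For the parity point, fix such an $x$ and note that $x+e_1\in\Z_+^d\setminus\sX$ as well; the one-step Markov property gives $\Lambda_{2n+1}(s,x)\ge\ln p(x,\,x+e_1)+\Lambda_{2n}(s,\,x+e_1)$, and since $\frac1{2n}\Lambda_{2n}(s,\,x+e_1)\to\ln\psi(s)$ by the even-$n$ statement applied at $x+e_1$ while $\frac{2n}{2n+1}\to1$, we obtain $\liminf_{n\to\infty}\frac1n\Lambda_n(s,x)\ge\ln\psi(s)$ along all $n$; together with Lemma~\ref{L:Lambdau} this yields the full limit for every $x\in\Z_+^d\setminus\sX$. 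For the starting-point restriction, pick some $z_0\in\Z_+^d\setminus\sX$ reachable from $\zero$ in $\ell_0=|z_0|$ steps with positive probability, so that $\Lambda_n(s,\zero)\ge\ln\P_\zero(X_{\ell_0}=z_0)+\Lambda_{n-\ell_0}(s,z_0)$ and hence $\lim_{n\to\infty}\frac1n\Lambda_n(s,\zero)=\ln\psi(s)$; then, for arbitrary $x\in\Z^d$, the two inequalities of \eqref{e:Lambda0}, applied with $n$ replaced by $n+|x|$ and by $n-|x|$ respectively, sandwich $\frac1n\Lambda_n(s,x)$ between two quantities that both converge to $\lim_m\frac1m\Lambda_m(s,\zero)=\ln\psi(s)$. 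All of this is bookkeeping around the Markov property, and I expect no difficulty.

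Finally, it is worth recording the payoff, which is the reason the corollary is isolated. It says that $\Lambda(s):=\lim_{n\to\infty}\frac1n\Lambda_n(s,x)=\ln\psi(s)$, the limiting scaled logarithmic moment generating function of the reflected walk $\frac1n(|X^1_n|,\dots,|X^d_n|)$, is finite for all $s\in\R^d$; and a short computation shows it is continuously differentiable, since each per-coordinate summand of $\psi$ matches in value and in first derivative across $s_i=s_0$ (indeed $\lambda e^{-s_0}+e^{s_0}=2\sqrt\lambda=(1+\lambda)\rho_\lambda$ and $-\lambda e^{-s_0}+e^{s_0}=0$). Hence $\ln\psi$ is essentially smooth, the G\"{a}rtner--Ellis theorem applies to $\{\tfrac1n(|X^1_n|,\dots,|X^d_n|)\}_n$, and the LDP of Theorem~\ref{thm3.1} follows with the good rate function $\Lambda^\ast=(\ln\psi)^\ast$, that is $\Lambda^\ast(x)=\sup_{s\in\R^d}\{(s,x)-\ln\psi(s)\}$. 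What then remains in this section are the purely analytic tasks of locating $\mathcal D_{\Lambda^\ast}$ and its unique zero, and of evaluating $\Lambda^\ast$ in closed form, which is feasible for $d\in\{1,2\}$ with $\lambda\in(0,1)$ and for $\lambda=0$ with $d\ge2$, but remains open for $d\ge3$.
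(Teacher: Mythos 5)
Your proof is correct and follows the same route as the paper, which simply combines Lemma~\ref{L:Lambdau} (the $\limsup$ upper bound) with Lemma~\ref{L:Lambdal} (the $\liminf$ lower bound). You go further than the paper's one-line deduction by explicitly carrying out the reductions — from odd to even $n$ via the one-step Markov property, and from $x\in\Z_+^d\setminus\sX$ to general $x\in\Z^d$ via \eqref{e:Lambda0} — which the paper leaves implicit in the proof of Lemma~\ref{L:Lambdal}; this is careful bookkeeping, not a different method.
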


Define $\Lambda(s) := \ln \psi(s)$ and let $\Lambda^{\ast}(x) = \sup\limits_{s \in \R^d}
\left\{ \left( s,\, x \right) - \Lambda(s) \right\}$ be Fenchel-Legendre
transform of $\Lambda$. 

\begin{lem}\label{property0-1} Give any $d\geq 1$ and $\lambda\in (0,1).$ The effective domain of $\Lambda^{\ast}(\cdot)$ is
$$\mathcal{D}_{\Lambda^{\ast}}(\lambda):=\left\{x\in\mathbb{R}^d:\ \Lambda^\ast(x)<\infty\right\}=\left\{x=(x_1,\cdots,x_d)\in\mathbb{R}^d:\ \sum\limits_{i=1}^dx_i\leq 1,\ 0\leq x_i\leq 1,\ 1\leq i\leq d\right\}.$$
Furthermore, $\Lambda^{\ast}(\cdot)$ is strictly convex in
$x=(x_1,\cdots,x_d)\in \mathcal{D}_{\Lambda^{\ast}}(\lambda)$ with
$\sum\limits_{i=1}^dx_i<1,$ and $\Lambda^{\ast}(x) = 0$ if and only if $x =
\left(\frac{1-\lambda}{d(1+\lambda)},\cdots,\frac{1-\lambda}{d(1+\lambda)}\right)$. 
\end{lem}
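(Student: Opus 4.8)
I would reduce the statement to known properties of the Cram\'er rate function $\overline{\Lambda^*}$ of the simple random walk on $\Z^d$ recalled in the Remark. The starting point is that $t\mapsto \lambda e^{-t}+e^{t}$ attains its minimum $2\sqrt\lambda=\rho_\lambda(1+\lambda)$ at $t=s_0=\tfrac12\ln\lambda$, so \eqref{psi} can be rewritten as
\[
\psi(s)=\frac{1}{d(1+\lambda)}\sum_{i=1}^d\Big(\lambda e^{-(s_i\vee s_0)}+e^{s_i\vee s_0}\Big)
=\frac{\rho_\lambda}{2d}\sum_{i=1}^d\Big(e^{(s_i\vee s_0)-s_0}+e^{s_0-(s_i\vee s_0)}\Big),
\]
the last equality using $\tfrac{\rho_\lambda}{2}e^{-s_0}=\tfrac{1}{1+\lambda}$ and $\tfrac{\rho_\lambda}{2}e^{s_0}=\tfrac{\lambda}{1+\lambda}$. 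In particular $\psi(s)=\psi(s_1\vee s_0,\dots,s_d\vee s_0)$, and $\Lambda(s)=\ln\psi(s)$ is convex, being the pointwise limit of the convex log-moment generating functions $\tfrac1n\Lambda_n(s,\cdot\,)$ (Corollary~\ref{C:limit}).

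The key step is the identity
\begin{equation}\label{rate1}
\Lambda^{\ast}(x)=\frac12\Big(\sum_{i=1}^d x_i\Big)\ln\lambda-\ln\rho_\lambda+\overline{\Lambda^*}(x),\qquad x\in\R^d_+ .
\end{equation}
To prove it, fix $x\in\R^d_+$. Since replacing a coordinate $s_i$ by $s_i\vee s_0$ leaves $\psi(s)$ unchanged while (weakly) increasing $(s,x)$ (as $x_i\ge 0$), the supremum defining $\Lambda^{\ast}(x)$ may be taken over $\{s: s_i\ge s_0\text{ for all }i\}$. On that set, putting $t=(s_1-s_0,\dots,s_d-s_0)\in\R^d_+$ and using the rewriting of $\psi$,
\[
\Lambda^{\ast}(x)=\frac12\Big(\sum_{i=1}^d x_i\Big)\ln\lambda-\ln\rho_\lambda
+\sup_{t\in\R^d_+}\Big[(t,x)-\ln\tfrac{1}{2d}\sum_{i=1}^d\big(e^{t_i}+e^{-t_i}\big)\Big].
\]
Because the bracketed expression is even in each $t_i$ and $x\ge\mathbf0$, dropping the constraint $t\ge\mathbf0$ does not change the supremum, and the resulting unconstrained supremum is exactly $\overline{\Lambda^*}(x)$; this gives \eqref{rate1}.

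Now \eqref{rate1} yields all three assertions. The function $\overline{\Lambda^*}$ is finite precisely on the cross-polytope $\{x\in\R^d:\sum_i|x_i|\le 1\}$ (it is the rate function of a walk whose step law is uniform on $\{\pm e_i\}_{i=1}^d$), so by \eqref{rate1} $\Lambda^{\ast}$ is finite on $\{x\in\R^d_+:\sum_i x_i\le1\}$; and for $x$ with some coordinate $x_i<0$, the choice $s=-Me_i$ with $M\to\infty$ gives $(s,x)\to+\infty$ while $\Lambda(-Me_i)=\ln\psi(-Me_i)$ stays bounded, hence $\Lambda^{\ast}(x)=+\infty$. This identifies $\mathcal D_{\Lambda^{\ast}}(\lambda)$. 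For strict convexity, note that $\overline\Lambda(t):=\ln\tfrac1{2d}\sum_i(e^{t_i}+e^{-t_i})$ is everywhere finite, $C^\infty$ and strictly convex on $\R^d$ (its Hessian is positive definite by a Cauchy--Schwarz estimate), hence a function of Legendre type, so $\overline{\Lambda^*}$ is $C^\infty$ and strictly convex on $\mathrm{int}(\mathrm{dom}\,\overline{\Lambda^*})=\{x:\sum_i|x_i|<1\}$; since $\{x\in\R^d_+:\sum_i x_i<1\}\subseteq\{x:\sum_i|x_i|<1\}$, the affine identity \eqref{rate1} makes $\Lambda^{\ast}$ strictly convex there. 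Finally, writing $v=\big(\tfrac{1-\lambda}{d(1+\lambda)},\dots,\tfrac{1-\lambda}{d(1+\lambda)}\big)$: from $\psi(\mathbf0)=1$ we get $\Lambda(\mathbf0)=0$, so $\Lambda^{\ast}\ge 0$; as $s_0<0$, $\psi$ is smooth near $\mathbf0$ with $\partial_i\psi(\mathbf0)=\tfrac{1-\lambda}{d(1+\lambda)}$, hence $\nabla\Lambda(\mathbf0)=v$, the concave map $s\mapsto(s,v)-\Lambda(s)$ is maximized at $s=\mathbf0$, and $\Lambda^{\ast}(v)=-\Lambda(\mathbf0)=0$. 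If $\Lambda^{\ast}(x)=0$ with $x\ne v$, convexity forces $\Lambda^{\ast}\equiv 0$ on $[v,x]$; but $v$ has all coordinates positive and $\sum_i v_i=\tfrac{1-\lambda}{1+\lambda}<1$, so a nondegenerate subsegment of $[v,x]$ lies in $\{y\in\R^d_+:\sum_i y_i<1\}$, where $\Lambda^{\ast}$ is strictly convex --- a contradiction. Hence $v$ is the unique zero of $\Lambda^{\ast}$.

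The main obstacle is concentrated in \eqref{rate1}: getting the algebraic normalization of $\psi$ right and, above all, justifying the two successive reductions (first to $\{s_i\ge s_0\}$, then to unconstrained $t$), both of which genuinely rely on $x\ge\mathbf0$ --- this is exactly why coordinates $x_i<0$ must be treated by a separate explicit test direction. Everything after \eqref{rate1} is routine convex analysis together with standard properties of the lattice simple-random-walk rate function.
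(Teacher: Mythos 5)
Your proof is correct and follows essentially the same route as the paper: derive the affine identity $\Lambda^{\ast}(x)=\tfrac12(\sum_i x_i)\ln\lambda-\ln\rho_\lambda+\overline{\Lambda^*}(x)$ for $x\in\R^d_+$ by the two reductions (restrict to $s_i\ge s_0$, then substitute and drop the constraint using evenness), read off the effective domain from that of $\overline{\Lambda^*}$, and get strict convexity and the unique zero from the strict convexity of the SRW rate function. Your version is slightly more careful at a couple of points the paper glosses over: you give an explicit test direction $s=-Me_i$ to dispose of coordinates $x_i<0$, and you place the Hessian/strict-convexity argument on the smooth function $\overline\Lambda(t)=\ln\tfrac1{2d}\sum_i(e^{t_i}+e^{-t_i})$ rather than directly on $\Lambda(s)=\ln\psi(s)$ (the paper asserts the Hessian of $\Lambda$ is positive definite, which fails on the region where some $s_i<s_0$ and $\psi$ is locally constant in that coordinate); you also spell out $\nabla\Lambda(\mathbf 0)=v$ and the contradiction argument for uniqueness of the zero, which the paper leaves implicit.
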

\pf
Note that $s_0=\frac{1}{2}\ln\lambda.$ 
%
%
%
Firstly, let $$\mathcal{D}=\left\{x=(x_1,\cdots,x_d)\in\mathbb{R}^d: 0\leq x_i\leq \sum\limits_{j=1}^{d}x_j\leq 1,\ 1\leq i\leq d\right\}.$$
Then for any $x\in \mathcal{D}$, we have
\begin{eqnarray}\label{rate1}
\nonumber \Lambda^*(x)&=&\sup\limits_{s_i\geq s_0,1\leq i\leq d}\ln\left\{\frac{1}{2}(1+\lambda)\frac{e^{\sum\limits_{i=1}^ds_ix_i}}{\frac{1}{2d}\sum\limits_{i=1}^d(\lambda e^{-s_i}+e^{s_i})}\right\}\\
   \nonumber&=&\sup\limits_{y_i\geq 0,1\leq i\leq d}\ln\left\{\frac{1}{2}(1+\lambda)\lambda^{-\frac{1}{2}+\frac{1}{2}\sum\limits_{i=1}^dx_i}\frac{e^{\sum\limits_{i=1}^dy_ix_i}}{\frac{1}{2d}\sum\limits_{i=1}^d(e^{-y_i}+e^{y_i})}\right\}\\
   &=&\frac{1}{2}\sum\limits_{i=1}^d x_i \ln (\lambda)- \ln(\rho_{\lambda})+\sup\limits_{y_i\geq 0,1\leq i\leq d}\ln\left\{\frac{e^{\sum\limits_{i=1}^dy_ix_i}}{\frac{1}{2d}\sum\limits_{i=1}^d(e^{-y_i}+e^{y_i})}\right\}< \infty.
\end{eqnarray}
In fact, for any $s=(s_1,\cdots,s_d)\in\mathbb{R}^d,$ $\sum\limits_{i=1}^ds_ix_i$ is increasing in each $s_i$ and
$\psi(s)=\psi\left(\tilde{s}\right)$, where $\tilde{s}$ is defined in Lemma \ref{L:Lambdau}. And for $x \in \mathcal{D}^c,$ it's easy to verify that $\Lambda^*(x)= \infty.$ 

By \cite[Lemma~2.3.9]{DZ1998}, $\Lambda^*$ is a good convex rate function. It's obvious that the Hessian matrix of $\Lambda(s)$ is positive-definite which implies strict concavity of $s\cdot x- \Lambda(s)$, thus the local maximum of $s\cdot x- \Lambda(s)$ exists uniquely and is attained at a finite solution $s=s(x)$, i.e.
\begin{eqnarray*}\label{solution}
\Lambda^{\ast}(x)=s(x)\cdot x- \Lambda(s(x)).
\end{eqnarray*}
Then according to implicit function theorem, $\Lambda^{\ast}(\cdot)$ in $\left\{x=(x_1,\cdots,x_d)\in\mathbb{R}_+^d:\ \sum\limits_{i=1}^{d} x_i< 1\right\}$ is strictly convex.

Finally, due to the strict convexity of $\Lambda^{*}(\cdot)$ and (\ref{rate1}), $
\left(\frac{1-\lambda}{d(1+\lambda)},\cdots,\frac{1-\lambda}{d(1+\lambda)}\right)$ is the unique solution of $\Lambda^{*}(x)= 0$.
\qed
\vskip 3mm

Assume $d=1, 2 \mbox{ and } \lambda\in (0,1)$, we can get explicit formula of $\Lambda^{*}$ by calculating rate function $\overline{\Lambda^{*}}$ of SRW which is omitted here. For $\lambda=0$, we have the following explicit expression.\\

\begin{lem}\label{property0}
For any $d\geq 2$ and $\lambda=0,$
\begin{eqnarray*}
&&\mathcal{D}_{\Lambda^{\ast}}(0):=\left\{x\in\mathbb{R}^d:\ \Lambda^\ast(x)<\infty\right\}=\left\{x=(x_1,\cdots,x_d)\in\mathbb{R}^d_+:\ \sum\limits_{i=1}^dx_i=1\right\},\\
&&\Lambda^\ast (x)=\left\{\begin{array}{cl}
  \ln d+\sum\limits_{i=1}^dx_i\ln x_i,\ &x\in\mathcal{D}_{\Lambda^\ast}(0),\\
  \ \\
  +\infty, \ & otherwise,
  \end{array}
\right.\\
&&\left\{x\in\mathbb{R}^d:\ \Lambda^\ast(x)=0\right\}=\left\{\left(\frac{1}{d},\cdots,\frac{1}{d}\right)\right\}.
\end{eqnarray*}
\end{lem}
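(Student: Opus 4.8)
The plan is to treat this as a pure convex-analysis computation: when $\lambda=0$ the function $\psi$ in \eqref{psi} collapses to a normalized log-sum-exp, whose Fenchel--Legendre transform is classical. First I would specialize \eqref{psi}. Since $s_0=\tfrac12\ln\lambda=-\infty$ when $\lambda=0$, the indicators $I_{\{s_i\ge s_0\}}$ are identically $1$ and $N(s)=\sum_{i=1}^d I_{\{s_i<s_0\}}\equiv 0$; moreover $\rho_0=0$ and $\lambda e^{-s_i}=0$. Hence $\psi(s)=\tfrac1d\sum_{i=1}^d e^{s_i}$ and $\Lambda(s)=\ln\psi(s)=\ln\!\bigl(\tfrac1d\sum_{i=1}^d e^{s_i}\bigr)$ for every $s=(s_1,\dots,s_d)\in\mathbb R^d$.

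Next I would identify the effective domain. The key structural feature is the translation identity $\Lambda(s+c\mathbf 1)=\Lambda(s)+c$ for all $c\in\mathbb R$, which gives $(s+c\mathbf 1,x)-\Lambda(s+c\mathbf 1)=(s,x)-\Lambda(s)+c\bigl(\sum_{i=1}^d x_i-1\bigr)$; letting $c\to\pm\infty$ forces $\Lambda^\ast(x)=+\infty$ unless $\sum_{i=1}^d x_i=1$. Similarly, fixing all coordinates of $s$ except the $i$-th and sending $s_i\to-\infty$, the term $s_ix_i$ tends to $+\infty$ when $x_i<0$ while $\Lambda(s)$ stays bounded below, so $\Lambda^\ast(x)=+\infty$ whenever some $x_i<0$. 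Thus $\mathcal D_{\Lambda^\ast}(0)\subseteq\{x\in\mathbb R^d_+:\sum_i x_i=1\}$, and the reverse inclusion will follow from the explicit finite value computed below.

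For the value, I would fix $x$ in the simplex and estimate $(s,x)-\Lambda(s)$ for arbitrary $s$. By the weighted arithmetic--geometric mean inequality (equivalently, Jensen's inequality for $\ln$), using $\sum_i x_i=1$,
\[
\frac1d\sum_{i=1}^d e^{s_i}=\frac1d\sum_{i=1}^d x_i\cdot\frac{e^{s_i}}{x_i}\ \ge\ \frac1d\prod_{i=1}^d\Bigl(\frac{e^{s_i}}{x_i}\Bigr)^{x_i}=\frac1d\exp\Bigl(\sum_{i=1}^d x_i s_i-\sum_{i=1}^d x_i\ln x_i\Bigr),
\]
with the convention $0\ln0=0$ (the corresponding factor being $1$ when $x_i=0$). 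Taking logarithms yields $(s,x)-\Lambda(s)\le\ln d+\sum_{i=1}^d x_i\ln x_i$ for every $s$, hence $\Lambda^\ast(x)\le\ln d+\sum_{i=1}^d x_i\ln x_i<\infty$. For the matching lower bound I would insert $s_i=\ln x_i$ on $\{i:x_i>0\}$ and $s_i=-T$ on $\{i:x_i=0\}$ and let $T\to\infty$, so that $(s,x)-\Lambda(s)\to\ln d+\sum_{i:\,x_i>0} x_i\ln x_i$; this proves equality on the whole simplex and, in particular, that $\mathcal D_{\Lambda^\ast}(0)$ is exactly $\{x\in\mathbb R^d_+:\sum_i x_i=1\}$.

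Finally, for the zero set I would rewrite $\Lambda^\ast(x)=\sum_{i=1}^d x_i\ln(dx_i)=\sum_{i=1}^d x_i\ln\frac{x_i}{1/d}$ on the simplex, which is the relative entropy of $x$ with respect to the uniform law on $\{1,\dots,d\}$; it is nonnegative and, by strict convexity of $t\mapsto t\ln t$ together with Jensen's inequality, vanishes only at $x=(1/d,\dots,1/d)$. (Alternatively, one invokes strict convexity of $\Lambda^\ast$ in the relative interior of the simplex, as in Lemma~\ref{property0-1}.) The one point needing slight care is the behaviour on the relative boundary, i.e.\ when some $x_i=0$; this is handled cleanly by the $0\ln0=0$ convention in the weighted AM--GM step and by the explicit $s_i\to-\infty$ choice above, and it is where I would spend the most attention, the remainder being a direct computation.
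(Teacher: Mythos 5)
Your proof is correct and rests on the same core computation as the paper's: specialize $\psi$ to $\psi(s)=\tfrac1d\sum_i e^{s_i}$, then derive $\Lambda^\ast(x)=\ln d+\sum_i x_i\ln x_i$ on the simplex via Jensen's inequality (your weighted AM--GM is the exponential form of the same step, with the same weights $x_i$ summing to $1$). The only genuine difference is in handling the relative boundary $\{x_i=0\}$: the paper proves the upper bound by Jensen with the $0\ln 0=0$ convention and then invokes lower semi-continuity of $\Lambda^\ast$ (together with its convexity, which yields continuity along segments into the effective domain) to get the matching equality, whereas you obtain the lower bound directly by plugging in $s_i=\ln x_i$ on $\{x_i>0\}$ and $s_i=-T$ on $\{x_i=0\}$ and letting $T\to\infty$; your version is more elementary and self-contained. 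Your translation-identity argument for $\mathcal D_{\Lambda^\ast}(0)\subseteq\{x\in\R^d_+:\sum_i x_i=1\}$ spells out what the paper dismisses as ``Clearly.'' One small caveat: your parenthetical alternative for the zero set, invoking strict convexity ``as in Lemma~\ref{property0-1},'' does not literally apply, since that lemma is stated only for $\lambda\in(0,1)$; but your primary argument identifying $\Lambda^\ast$ with the relative entropy against the uniform distribution on $\{1,\dots,d\}$ is correct and suffices.
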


\pf  Clearly 
$$\mathcal{D}_{\Lambda^{\ast}}(0)\subseteq \left\{x=(x_1,\cdots,x_d)\in\mathbb{R}^d_+:\ \sum\limits_{i=1}^dx_i=1\right\}.$$
Assume firstly $x_i> 0, 1\leq i\leq d.$ Let $y_i=e^{s_i},\ 1\leq i\leq d,$ then by the Jensen inequality,
\begin{eqnarray*}
&&s\cdot x-\ln\left(\frac{1}{d}\sum\limits_{i=1}^de^{s_i}\right)=\sum\limits_{i=1}^dx_i\ln\frac{y_i}{x_i}-\ln\left(\sum\limits_{i=1}^dy_i\right)+ \sum\limits_{i=1}^dx_i\ln x_i+\ln d \\
&&\ \ \ \ \leq \ln\left(\sum\limits_{i=1}^d x_i\frac{y_i}{x_i}\right)-\ln\left(\sum\limits_{i=1}^dy_i\right)+\sum\limits_{i=1}^dx_i\ln x_i+\ln d\\
&&\ \ \ \ =\sum\limits_{i=1}^dx_i\ln x_i+\ln d,
\end{eqnarray*}
and the inequality in the second line becomes equality only if each $s_i=\ln x_i.$ 

If there exists some $i$ such that $x_i=0,$ we still have that   
\begin{eqnarray*}
s\cdot x-\ln\left(\frac{1}{d}\sum\limits_{i=1}^de^{s_i}\right)\leq \sum\limits_{i=1}^dx_i\ln x_i+\ln d,
\end{eqnarray*}
then $\sup\limits_{s\in\mathbb{R}^d}\left\{s\cdot x-\ln\left(\frac{1}{d}\sum\limits_{i=1}^de^{s_i}\right)\right\}=\sum\limits_{i=1}^dx_i\ln x_i+\ln d$ by lower semi-continuity of $\Lambda^*(\cdot).$
Hence 
\begin{eqnarray*}
&&\mathcal{D}_{\Lambda^{\ast}}(0)=\left\{x=(x_1,\cdots,x_d)\in\mathbb{R}^d_+:\ \sum\limits_{i=1}^dx_i=1\right\},\\
&&\left\{x\in\mathbb{R}^d:\ \Lambda^\ast(x)=0\right\}=\left\{\left(\frac{1}{d},\cdots,\frac{1}{d}\right)\right\}.
\end{eqnarray*}
\qed
\vskip 3mm

Denote by $\mathbb{F}$ the set of exposed points of $\Lambda^{\ast}(\cdot)$ whose exposing hyperplane belong to $\mathcal{D}_{\Lambda}^{o}$.
Here $y\in\mathbb{R}^d$ is an exposed point of $\Lambda^\ast$ if for some $s\in\mathbb{R}^d,$
$$(y,s)-\Lambda^\ast(y)>(x,s)-\Lambda^\ast(x),\ \forall x\in\mathbb{R}^d\setminus\{y\};$$
and we call the above $s$ an exposing hyperplane.
\begin{lem}\label{infimumattained}
For any open set $G$ of $\mathbb{R}^d,$
\[\inf_{x\in G\cap{\mathbb{F}}} \Lambda^{\ast}(x)=\inf_{x\in G} \Lambda^{\ast}(x)\].
\end{lem}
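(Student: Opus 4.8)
The plan is to prove the only nontrivial direction, $\inf_{x\in G\cap\mathbb{F}}\Lambda^{\ast}(x)\le\inf_{x\in G}\Lambda^{\ast}(x)$, the reverse being immediate from $\mathbb{F}\subseteq\R^d$. If $\inf_{x\in G}\Lambda^{\ast}(x)=+\infty$ there is nothing to prove, so I would put $m:=\inf_{x\in G}\Lambda^{\ast}(x)<\infty$, fix $\varepsilon>0$, and choose $z\in G$ with $\Lambda^{\ast}(z)<m+\varepsilon$, so that $z\in\mathcal{D}_{\Lambda^{\ast}}$. Let $x^{\ast}$ be the unique minimizer of $\Lambda^{\ast}$, namely $x^{\ast}=\bigl(\tfrac{1-\lambda}{d(1+\lambda)},\cdots,\tfrac{1-\lambda}{d(1+\lambda)}\bigr)$ when $\lambda\in(0,1)$ and $x^{\ast}=\bigl(\tfrac1d,\cdots,\tfrac1d\bigr)$ when $\lambda=0$ (Lemmas~\ref{property0-1} and~\ref{property0}); in both cases $x^{\ast}$ lies in the relative interior of the convex set $\mathcal{D}_{\Lambda^{\ast}}$. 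For $t\in(0,1)$ set $z_t=(1-t)z+tx^{\ast}$. Then $z_t$ lies in the relative interior of $\mathcal{D}_{\Lambda^{\ast}}$ (concretely $(z_t)_i>0$ for every $i$, and $\sum_i(z_t)_i<1$ when $\lambda\in(0,1)$, resp.\ $\sum_i(z_t)_i=1$ when $\lambda=0$), and by convexity of $\Lambda^{\ast}$ together with $\Lambda^{\ast}(x^{\ast})=0$,
\[
\Lambda^{\ast}(z_t)\le(1-t)\Lambda^{\ast}(z)+t\,\Lambda^{\ast}(x^{\ast})=(1-t)\Lambda^{\ast}(z)<m+\varepsilon\qquad\text{for all }t\in(0,1).
\]
Since $G$ is open and $z_t\to z\in G$ as $t\downarrow0$, I can fix $t$ small enough that also $z_t\in G$.

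The heart of the argument is then to show that any point $y$ in the relative interior of $\mathcal{D}_{\Lambda^{\ast}}$, in particular $y=z_t$, is an exposed point of $\Lambda^{\ast}$ with exposing hyperplane in $\mathcal{D}_{\Lambda}^{o}$. Since $\psi$ is finite on $\R^d$, $\mathcal{D}_{\Lambda}=\mathcal{D}_{\Lambda}^{o}=\R^d$, so the requirement on the exposing hyperplane is automatic, and $\Lambda=\ln\psi$ is a finite convex (hence continuous) function on $\R^d$. I would first record that $\Lambda\in C^1(\R^d)$: for $\lambda=0$ this is clear since $\psi(s)=\tfrac1d\sum_ie^{s_i}$, while for $\lambda\in(0,1)$ one checks $C^1$-smoothness of $\psi$ across the seams $\{s_i=s_0\}$ using that, as $s_i\downarrow s_0$, the value $\lambda e^{-s_i}+e^{s_i}\to 2\sqrt{\lambda}=\rho_{\lambda}(1+\lambda)$ and its derivative $-\lambda e^{-s_i}+e^{s_i}\to -\sqrt{\lambda}+\sqrt{\lambda}=0$, which match the constant contribution $\rho_{\lambda}/d$ and its zero derivative coming from $N(s)$ when $s_i<s_0$; consequently $\partial\Lambda(s)=\{\nabla\Lambda(s)\}$ for every $s$. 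By the argument in the proof of Lemma~\ref{property0-1} (resp.\ the explicit choice $s_i=\ln y_i$ when $\lambda=0$), the supremum in $\Lambda^{\ast}(y)=\sup_{s}\{(s,y)-\Lambda(s)\}$ is attained at a \emph{finite} $s=s(y)$, and, $\Lambda$ being $C^1$, the first-order condition gives $\nabla\Lambda(s(y))=y$, i.e.\ $(s(y),y)-\Lambda^{\ast}(y)=\Lambda(s(y))$. Now for any $x\ne y$ the Fenchel--Young inequality $\Lambda(s(y))+\Lambda^{\ast}(x)\ge(s(y),x)$ holds with equality iff $x\in\partial\Lambda(s(y))=\{y\}$, whence $(s(y),x)-\Lambda^{\ast}(x)<\Lambda(s(y))=(s(y),y)-\Lambda^{\ast}(y)$; that is, $y$ is exposed with exposing hyperplane $s(y)\in\R^d$. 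Applying this to $y=z_t$ yields $z_t\in G\cap\mathbb{F}$ with $\Lambda^{\ast}(z_t)<m+\varepsilon$, so $\inf_{x\in G\cap\mathbb{F}}\Lambda^{\ast}(x)\le m+\varepsilon$; letting $\varepsilon\downarrow0$ finishes the proof.

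I expect the main obstacle to be the differentiability of $\Lambda$ across the seams $\{s_i=s_0\}$: it is this, rather than any strict convexity of $\Lambda$ (which fails along the direction $\mathbf{1}$ when $\lambda=0$), that forces every relative-interior point of $\mathcal{D}_{\Lambda^{\ast}}$ to be exposed, and combined with the finiteness of the dual optimizer $s(y)$ it is the only non-formal input---and both ingredients are essentially already contained in the computations carried out for Lemmas~\ref{property0-1} and~\ref{property0}. Everything else is the standard device, used in the proof of the G\"{a}rtner--Ellis lower bound, of sliding a near-minimizer toward the zero of the rate function along a segment that stays in the relative interior of the effective domain.
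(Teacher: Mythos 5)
Your argument is correct and follows the same structural outline as the paper: identify the exposed points $\mathbb{F}$ as (at least) the relative interior of $\mathcal{D}_{\Lambda^{\ast}}$, and then slide a near-optimal $z\in G$ along the segment toward the zero $x^{\ast}$ of $\Lambda^{\ast}$ to land in $G\cap\mathbb{F}$ without increasing $\Lambda^{\ast}$. Where you differ is in how the inclusion $\mathrm{ri}\,\mathcal{D}_{\Lambda^{\ast}}\subseteq\mathbb{F}$ is obtained: the paper cites the Duality Lemma \cite[Lemma~4.5.8]{DZ1998} as a black box for $\lambda\in(0,1)$ and treats $\lambda=0$ by a separate direct computation, whereas you prove the inclusion directly for all $\lambda\in[0,1)$ via the Fenchel--Young equality criterion, after explicitly verifying the two nontrivial hypotheses: that $\Lambda=\ln\psi$ is $C^1$ on $\R^d$ (matching values and derivatives across the seams $\{s_i=s_0\}$) and that the dual supremum is attained at a finite $s(y)$ for $y$ in the relative interior. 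This is a genuine improvement in rigor, since essential smoothness of $\Lambda$ is exactly what the Duality Lemma requires and the paper never checks it; your unified treatment also avoids the paper's slight awkwardness in having to argue the $\lambda=0$ and boundary cases separately. The only place you lean on the paper rather than prove something yourself is the finiteness of the optimizer $s(y)$, which you correctly attribute to the computation inside Lemma~\ref{property0-1} (for $\lambda\in(0,1)$) and the explicit choice $s_i=\ln y_i$ (for $\lambda=0$); it would cost only a line to note the coercivity $\;(s,y)-\Lambda(s)\le s_{\max}\bigl(\textstyle\sum_i y_i-1\bigr)+O(1)\to-\infty$ when $\sum_i y_i<1$, but this is cosmetic.
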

\pf Assume $\lambda\in (0,1)$ and $d\geq 1.$  
By Duality lemma \cite[Lemma~4.5.8]{DZ1998}, we have that
$$\left\{x=(x_1,\cdots,x_d)\in\mathbb{R}_+^d:\ \sum\limits_{i=1}^d x_i< 1\right\}\subseteq \mathbb{F}.$$
By strict convexity of $\Lambda^*(\cdot)$ and (\ref{rate1}), for any $x=(x_1,\cdots,x_d)\in\mathbb{R}_+^d$ with $\sum\limits_{i=1}^dx_i=1,$
$$\Lambda^*(x)\geq\limsup\limits_{n\rightarrow\infty}\Lambda^*\left[x-\frac{1}{n}\left(x-\left(\frac{1-\lambda}{d(1+\lambda)},\cdots, \frac{1-\lambda}{d(1+\lambda)}\right)\right)\right]$$
which leads to the conclusion.
\vskip 2mm

Assume $\lambda=0$ and $d\geq 2.$ We have that
$$\mathbb{F}= \left\{x=(x_1,\cdots,x_d)\in\mathbb{R}_+^d:\ \sum\limits_{i=1}^d x_i = 1,\mbox{and}\ x_i >0, 1\leq i\leq d\right\}.$$
Then by Lemma \ref{property0}, for any open set $G,$
\[\inf_{x\in G\cap \mathbb{F}} \Lambda^{\ast}(x)=\inf_{x\in G} \Lambda^{\ast}(x).\]
\qed
\vskip 3mm

\noindent{\bf Proof of Theorem \ref{thm3.1}.} Let $\mu_n$ be the law of $\left(\frac{\left\vert X_n^1\right\vert}{n},\cdots,\frac{\left\vert X_n^d\right\vert}{n}\right)$ for any $n\in\mathbb{N}.$ From
Corollary~\ref{C:limit}, Lemma~\ref{property0-1} and \ref{property0}, the logarithmic moment generating function exists with $\Lambda(s)= \ln \psi(s)$ and
\[\mathcal{D}_\Lambda=\left\{s\in\mathbb{R}^d:\ \Lambda(s)<\infty\right\}=\mathbb{R}^d=\mathcal{D}_\Lambda^o.\]
By $\bf{(a)}$ and $\bf{(b)}$ of the Gartner-Ellis theorem and Lemma \ref{infimumattained}, we have that
for any closed set $F\subseteq\mathbb{R}^d,$
\[\limsup\limits_{n \rightarrow \infty} \frac{1}{n} \ln\mu_n(F)\leq-\inf\limits_{x\in F}\Lambda^{\ast}(x);\]
and for any open set $G$ of $\mathbb{R}^d,$
\[\liminf\limits_{n \rightarrow \infty} \frac{1}{n} \ln \mu_n(G)\geq-\inf\limits_{x\in G} \Lambda^{\ast}(x).\]
The proof is done.
\qed

\bigskip
\noindent{\bf Acknowledgements.} Y. Liu, L. Wang and  K. Xiang thank NYU Shanghai - ECNU Mathematical Institute for hospitality and financial support. V. Sidoravicius thanks Chern Institute of Mathematics for
hospitality and financial support.

\end{document}